\numberwithin{equation}{section} 
\numberwithin{figure}{section} 
\theoremstyle{plain}
\newtheorem{thm}{Theorem}[section]
  \theoremstyle{plain}
  \newtheorem{fact}[thm]{Fact}
  \theoremstyle{plain}
  \newtheorem{prop}[thm]{Proposition}
  \theoremstyle{remark}
  \newtheorem{rem}[thm]{Remark}
  \theoremstyle{plain}
  \newtheorem{lem}[thm]{Lemma}
  \theoremstyle{remark}
  \newtheorem*{acknowledgement*}{Acknowledgement}
\newcommand{\N}{\mathbb{N}}
\newcommand{\Q}{\mathbb{Q}}
\newcommand{\R}{\mathbb{R}}
\renewcommand{\phi}{\varphi}
\renewcommand{\Pi}{\pi}
\renewcommand{\hat}{\widehat}
\renewcommand{\lg}{\log}
\renewcommand{\Im}{\mathrm{Im}}
\newcommand{\diam}{\mathrm{diam}}
\newcommand{\HD}{\mathrm{HD}}
\DeclareMathOperator*{\sumsum}{\sum\cdots\sum}
\DeclareMathOperator*{\sums}{\sum\sum}
\begin{document}
\selectlanguage{english}

\title[The arithmetic-geometric scaling spectrum]{The arithmetic-geometric scaling spectrum for continued fractions}

\date{\today}

\author{Johannes Jaerisch and Marc Kesseböhmer}

\address{Universität Bremen, Bibliothekstrasse 1, 28356 Bremen}

\email{mhk@math.uni-bremen.de, jogy@math.uni-bremen.de}

\urladdr{www.math.uni-bremen.de/stochdyn}

\subjclass[2000]{11K50 primary; 37A45 11J06, 28A80 secondary}
 \begin{abstract}
To compare continued fraction digits with the denominators of the
corresponding approximants we introduce the a\-rith\-me\-tic-ge\-ometric
scaling. We will completely determine its multifractal spectrum by
means of a number theoretical free energy function and show that the
Hausdorff dimension of sets consisting of irrationals with the same
scaling exponent coincides with the Legendre transform of this free
energy function. Furthermore, we identify the asymptotic of the local
behaviour of the spectrum at the right boundary point and discuss
a connection to the set of irrationals with continued fraction digits
exceeding a given number which tends to infinity.
\end{abstract}

\keywords{Continued fraction, multifractals, Gauss map, Riemann zeta-function.}

\maketitle

\section{Introduction and statement of results}

Many ergodic and dynamical properties of the Gauss map \begin{equation}
T\colon\mathbb{I}\to\mathbb{I},\: x\mapsto\frac{1}{x}-\left\lfloor \frac{1}{x}\right\rfloor ,\label{eq:GaussMap}\end{equation}
 where $\mathbb{I}:=\left(0,1\right)\setminus\Q$, have been studied
in great detail (e.g. \cite{Kuzmin1928,Wirsing:74}). Also its close
relation to the Riemann $\zeta$-function via its Mellin transform
is well-known, i.e. $\zeta\left(s\right)-1=\left(s-1\right)^{-1}-s\int_{0}^{1}T\left(x\right)x^{s-1}\, dx$.
From the ergodic theoretical point of view the Gauss map reflects
mainly geometric features of continued fraction expansion, whereas
the Riemann $\zeta$-function reflects important arithmetic properties.
In this paper we establish an approach to quantify the difference
of these two aspects. For this let us begin with some elementary observations.

Every $x\in\mathbb{I}$ has a unique representation by its regular
continued fraction expansion, i.e. we have a bijective map $\pi\colon\mathbb{I}\longrightarrow\N^{\N}$,
where $\pi\left(x\right)=\left(a_{i}\left(x\right)\right)_{i\in\N}$
with \[
x=\cfrac{1}{a_{1}\left(x\right)+\cfrac{1}{a_{2}\left(x\right)+\cfrac{1}{a_{3}\left(x\right)+\cdots}}}.\]
 For $n\in\N$ the $n$-th convergent of $x\in\mathbb{I}$ is given
by the reduced fraction \[
\frac{p_{n}\left(x\right)}{q_{n}\left(x\right)}:=\cfrac{1}{a_{1}\left(x\right)+\cfrac{1}{\ddots+\cfrac{1}{a_{n-1}\left(x\right)+\cfrac{1}{a_{n}\left(x\right)}}}},\]
which is uniquely determined by the first $n$ digits of its continued
fraction expansion. Hence, we will also use the notation $q_{n}\left(\omega\right):=q_{n}\left(x\right)$
and $p_{n}\left(\omega\right):=p_{n}\left(x\right)$ whenever $\omega$
is an infinite or finite word of length at least $n$ over the alphabet
$\N$ such that the vector of the first $n$ entries coincide with
$\left(a_{1}\left(x\right),\ldots,a_{n}\left(x\right)\right)$. For
the denominator we then have the following recursive formula \begin{eqnarray}
q_{n}\left(x\right) & = & a_{n}\left(x\right)q_{n-1}\left(x\right)+q_{n-2}\left(x\right),\label{eq:pnqn recursion}\end{eqnarray}
with $q_{-1}=0$ and $q_{0}=1$. From this one immediately verifies
that \begin{equation}
\prod_{i=1}^{n}a_{i}\left(x\right)\le q_{n}\left(x\right)\le2^{n}\prod_{i=1}^{n}a_{i}\left(x\right),\label{eq:qn versus prod ai einfach}\end{equation}
showing that the arithmetic expression $\prod_{i=1}^{n}a_{i}\left(x\right)$
does not differ too much from the geometric term $q_{n}\left(x\right)$.
Yet, the two terms may grow on different exponential scales. Our main
aim is to investigate the fluctuation of the asymptotic exponential
scaling. For this let us define the \emph{arithmetic-geometric scaling
of $x\in\mathbb{I}$} by $\lim_{n\rightarrow\infty}\lg\prod_{i=1}^{n}a_{i}\left(x\right)/\lg q_{n}\left(x\right)$
if the limit exists. The fluctuation of this quantity is captured
in the level sets \[
\mathcal{F}_{\alpha}:=\left\{ x\in\mathbb{I}:\lim_{n\rightarrow\infty}\frac{\lg\left(\prod_{i=1}^{n}a_{i}\left(x\right)\right)}{\lg q_{n}\left(x\right)}=\alpha\right\} \]
 for a prescribed scaling $\alpha\in\R$.

The following list of facts give a first impression of these level
sets. Their proofs will be postponed until Subsection \ref{sub:Proofs-of-facts}.
\begin{fact}
\label{fact1} For $\alpha\notin\left[0,1\right]$ we have $\mathcal{F}_{\alpha}=\emptyset$. 
\end{fact}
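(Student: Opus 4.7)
The plan is to read off the result directly from the double inequality (\ref{eq:qn versus prod ai einfach}) which was established in the excerpt. Taking logarithms of
\[
\prod_{i=1}^{n}a_{i}\left(x\right)\le q_{n}\left(x\right)\le2^{n}\prod_{i=1}^{n}a_{i}\left(x\right)
\]
gives
\[
0 \le \lg\prod_{i=1}^{n}a_{i}\left(x\right) \le \lg q_{n}\left(x\right) \le n\lg 2 + \lg\prod_{i=1}^{n}a_{i}\left(x\right),
\]
where the lower bound $0$ uses that $a_{i}\left(x\right)\ge 1$ for every $i$.

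Next I would observe that $\lg q_{n}\left(x\right)>0$ for all $n\ge 2$, since the recursion (\ref{eq:pnqn recursion}) together with $q_{-1}=0$, $q_{0}=1$ forces $q_{n}\left(x\right)\ge 2$ as soon as $n\ge 2$ (indeed $q_{n}\left(x\right)$ dominates the Fibonacci sequence). Dividing the first inequality above by $\lg q_{n}\left(x\right)$ therefore yields
\[
0 \le \frac{\lg\prod_{i=1}^{n}a_{i}\left(x\right)}{\lg q_{n}\left(x\right)} \le 1
\]
for every $x\in\mathbb{I}$ and every $n\ge 2$.

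Finally, since the sequence on the left lies in the closed interval $\left[0,1\right]$ for all sufficiently large $n$, any accumulation point -- in particular any limit -- must lie in $\left[0,1\right]$ as well. Hence no $x\in\mathbb{I}$ can realize a limit $\alpha\notin\left[0,1\right]$, which is exactly the assertion $\mathcal{F}_{\alpha}=\emptyset$.

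There is essentially no obstacle here; the fact is a direct consequence of the two-sided estimate (\ref{eq:qn versus prod ai einfach}). The only point that requires a brief justification is that $\lg q_{n}\left(x\right)$ is eventually positive so the quotient is well defined, but this follows immediately from the recursion for $q_{n}$.
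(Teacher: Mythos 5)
Your argument is correct and is essentially the paper's own proof, which simply notes that Fact \ref{fact1} is an immediate consequence of the first inequality in (\ref{eq:qn versus prod ai einfach}): since $1\le\prod_{i=1}^{n}a_{i}\left(x\right)\le q_{n}\left(x\right)$ and $\lg q_{n}\left(x\right)>0$ eventually, the quotient lies in $\left[0,1\right]$, so any limit does too. Your extra use of the upper bound $q_{n}\le2^{n}\prod a_{i}$ is harmless but not needed here.
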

\begin{fact}
\label{fact2} The noble numbers (i.e. those numbers whose continued
fraction expansion eventually contain only\/ $1$'s) are contained
in $\mathcal{F}_{0}$. 
\end{fact}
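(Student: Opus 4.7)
The plan is to exploit the fact that for a noble number the sequence of partial quotients is eventually identically $1$, so that the arithmetic numerator $\prod_{i=1}^{n}a_{i}(x)$ stabilizes, while the geometric denominator $q_{n}(x)$ continues to grow exponentially fast.

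First I would fix a noble $x\in\mathbb{I}$ and choose $N\in\N$ such that $a_{i}(x)=1$ for all $i\geq N$. Then for every $n\geq N$ one has
\[
\prod_{i=1}^{n}a_{i}(x)=\prod_{i=1}^{N-1}a_{i}(x)=:C,
\]
a constant depending only on $x$. Consequently $\lg\prod_{i=1}^{n}a_{i}(x)=\lg C$ stays bounded.

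Next I would show that $\lg q_{n}(x)\to\infty$ at linear rate. Using the recursion \eqref{eq:pnqn recursion} together with $a_{n}(x)=1$ for $n\geq N$, one obtains $q_{n}(x)=q_{n-1}(x)+q_{n-2}(x)$ for all $n\geq N$. This is the Fibonacci-type recursion, and a standard argument (either diagonalising the companion matrix or directly observing $q_{n}\geq q_{n-1}+q_{n-2}\geq 2q_{n-2}$) yields $q_{n}(x)\geq c\,\phi^{n}$ for some constant $c>0$, where $\phi=(1+\sqrt{5})/2$ is the golden ratio. In particular $\lg q_{n}(x)\geq n\lg\phi+\lg c\to\infty$.

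Combining the two bounds, the quotient $\lg\prod_{i=1}^{n}a_{i}(x)/\lg q_{n}(x)$ is a bounded quantity divided by a quantity tending to infinity, and therefore converges to $0$. Hence $x\in\mathcal{F}_{0}$. There is no real obstacle here; the only mild subtlety is verifying that $q_{n}\to\infty$ exponentially, which however is immediate from the Fibonacci recursion that the noble assumption forces on the tail of $(q_{n})$.
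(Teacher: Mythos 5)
Your argument is correct and is essentially the paper's (very terse) argument made explicit: the paper dismisses Fact \ref{fact2} as an immediate consequence of the elementary comparison (\ref{eq:qn versus prod ai einfach}), the point being exactly what you spell out — for a noble number the numerator $\lg\prod_{i=1}^{n}a_{i}(x)$ stabilizes while $\lg q_{n}(x)$ grows linearly (indeed $q_{n}\ge f_{n}\gg\gamma^{n}$, as the paper notes later), so the quotient tends to $0$.
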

\begin{fact}
\label{fact3}For $k\in\N$ the quadratic surd $\pi^{-1}\left(k,k,\dots\right)$
lies in $\mathcal{F}_{\alpha\left(k\right)}$, where \[
\alpha\left(k\right):=\frac{\lg k}{-\lg\left(-k/2+\sqrt{k^{2}/4+1}\right)}\in[0,1)\textrm{ and }\lim_{k\rightarrow\infty}\alpha\left(k\right)=1.\]

\end{fact}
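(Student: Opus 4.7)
The plan is to compute both the numerator and denominator of the defining ratio directly for the periodic input $x=\pi^{-1}(k,k,\dots)$.

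First I would handle the arithmetic side: since $a_i(x)=k$ for every $i$, we have $\prod_{i=1}^{n}a_i(x)=k^n$, so $\lg\prod_{i=1}^{n}a_i(x)=n\lg k$.

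Next I would analyse $q_n(x)$. Because every digit equals $k$, the recursion \eqref{eq:pnqn recursion} becomes the linear recurrence $q_n=k\,q_{n-1}+q_{n-2}$ with $q_{-1}=0$, $q_0=1$. Its characteristic polynomial is $\lambda^2-k\lambda-1$, whose roots are $\lambda_{\pm}=k/2\pm\sqrt{k^2/4+1}$. Since $\lambda_+>1$ and $|\lambda_-|<1$, the standard solution of a two-term linear recursion gives $q_n=A\lambda_+^n+B\lambda_-^n$ with $A>0$ determined by the initial conditions, hence
\[
\lim_{n\to\infty}\frac{\lg q_n(x)}{n}=\lg\lambda_+=\lg\!\left(\tfrac{k}{2}+\sqrt{k^2/4+1}\right).
\]
Using $\lambda_+\lambda_-^{\mathrm{sign}}=1$ more precisely, the identity $\bigl(k/2+\sqrt{k^2/4+1}\bigr)\bigl(-k/2+\sqrt{k^2/4+1}\bigr)=1$ yields $\lg\lambda_+=-\lg(-k/2+\sqrt{k^2/4+1})$. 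Combining the two computations,
\[
\lim_{n\to\infty}\frac{\lg\prod_{i=1}^{n}a_i(x)}{\lg q_n(x)}=\frac{\lg k}{-\lg(-k/2+\sqrt{k^2/4+1})}=\alpha(k),
\]
which is exactly the statement that $x\in\mathcal{F}_{\alpha(k)}$.

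It remains to check the two properties of $\alpha(k)$. Writing $\alpha(k)=\lg k/\lg\lambda_+$, the inequality $\alpha(k)<1$ is equivalent to $k<\lambda_+=k/2+\sqrt{k^2/4+1}$, i.e.\ $k/2<\sqrt{k^2/4+1}$, which is trivially true; nonnegativity holds since $\lambda_+>1$ and $k\ge 1$. For the limit, I would expand $\lambda_+=k\bigl(1/2+\sqrt{1/4+1/k^2}\bigr)=k\bigl(1+O(k^{-2})\bigr)$, so $\lg\lambda_+=\lg k+O(k^{-2})$ and hence $\alpha(k)=\lg k/(\lg k+O(k^{-2}))\to 1$ as $k\to\infty$.

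No step is a real obstacle: the only point requiring any care is recognising the Binet-type asymptotic for $q_n$, but the positivity of the leading coefficient $A$ is automatic from $q_0=1>0$ and $q_n>0$, so the $\lambda_-^n$ term is negligible and the limit is unaffected. The rest is algebraic manipulation.
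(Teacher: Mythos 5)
Your argument is correct, but it takes a different route from the paper. You solve the constant-digit case of the recursion $q_n=kq_{n-1}+q_{n-2}$ explicitly: Binet's formula with the characteristic roots $\lambda_{\pm}=k/2\pm\sqrt{k^2/4+1}$ gives $\lim_n \lg q_n/n=\lg\lambda_+$, and the identity $\bigl(k/2+\sqrt{k^2/4+1}\bigr)\bigl(-k/2+\sqrt{k^2/4+1}\bigr)=1$ converts this into $-\lg\bigl(-k/2+\sqrt{k^2/4+1}\bigr)$ (your shorthand ``$\lambda_+\lambda_-^{\mathrm{sign}}=1$'' is garbled, but the displayed identity is the right one, and your justification that the coefficient $A$ is positive is sound since $q_n\to\infty$). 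The paper instead argues dynamically: $\pi^{-1}(k,k,\dots)$ is a fixed point of the Gauss map, hence equals $-k/2+\sqrt{k^2/4+1}$, and the inequality (2.7) comparing $S_n\varphi$ with $-2\lg q_n$ (a consequence of the cylinder-diameter estimate) immediately gives $\lim_n\lg q_n/n=-\tfrac12\varphi\bigl((k,k,\dots)\bigr)=-\lg\bigl(-k/2+\sqrt{k^2/4+1}\bigr)$. The two computations agree because the dominant root $\lambda_+$ is the reciprocal of the fixed-point value. Your version is more elementary and self-contained, needing only the linear recurrence; the paper's version is shorter given the thermodynamic machinery already set up and illustrates the dynamical reading of $\lg q_n$ as a Birkhoff sum of the geometric potential, which is the viewpoint used throughout the rest of the paper. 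Your verification that $\alpha(k)\in[0,1)$ and $\alpha(k)\to1$ is also fine and is slightly more detailed than the paper's ``from this the claims follow.''
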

\begin{fact}
\label{fact4}The numbers having a continued fraction expansion with
digits tending to infinity are contained in $\mathcal{F}_{1}$, i.e.
$\mathcal{G}:=\left\{ x\in\mathbb{I}:a_{i}\left(x\right)\rightarrow\infty\right\} \subset\mathcal{F}_{1}$. 
\end{fact}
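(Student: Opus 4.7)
The plan is to read off Fact \ref{fact4} directly from the sandwich inequality \eqref{eq:qn versus prod ai einfach}. Taking logarithms in that inequality gives
\[
\lg \prod_{i=1}^n a_i(x) \;\le\; \lg q_n(x) \;\le\; n\lg 2 + \lg \prod_{i=1}^n a_i(x),
\]
which, upon dividing by $\lg q_n(x)$, yields the two-sided bound
\[
1 - \frac{n \lg 2}{\lg q_n(x)} \;\le\; \frac{\lg\prod_{i=1}^n a_i(x)}{\lg q_n(x)} \;\le\; 1.
\]
So the upper bound is free, and the whole statement reduces to showing that $n/\lg q_n(x) \to 0$ whenever $x \in \mathcal{G}$.

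For that step I would again invoke the left half of \eqref{eq:qn versus prod ai einfach}, which gives $\lg q_n(x) \ge \sum_{i=1}^n \lg a_i(x)$. The hypothesis $a_i(x)\to\infty$ implies $\lg a_i(x)\to\infty$, so by a standard Ces\`aro argument $\frac{1}{n}\sum_{i=1}^n \lg a_i(x)\to\infty$. Consequently $\lg q_n(x)/n\to\infty$, hence $n/\lg q_n(x)\to 0$, and the sandwich forces the ratio to tend to $1$. Combining this with Fact \ref{fact1} (which rules out $\alpha>1$) then shows $\mathcal{G}\subset \mathcal{F}_1$.

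There is no real obstacle here: the only substantive input is the pair of elementary inequalities \eqref{eq:qn versus prod ai einfach}, together with the observation that if a sequence tends to infinity then so does its running arithmetic mean. The statement is essentially a sanity check confirming that \eqref{eq:qn versus prod ai einfach} is sharp enough to detect the boundary scaling $\alpha=1$, and that the $2^n$ factor in \eqref{eq:qn versus prod ai einfach} is negligible precisely when the digits grow without bound.
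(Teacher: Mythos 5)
Your proposal is correct and is essentially the paper's own argument: both rest on the sandwich inequality (\ref{eq:qn versus prod ai einfach}) together with the observation that $a_i(x)\to\infty$ forces the Ces\`aro mean of $\lg a_i(x)$ to tend to infinity, differing only in a trivial algebraic rearrangement of the resulting lower bound.
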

\begin{fact}
\label{fact5}We have for $\lambda$-almost every $x\in\left(0,1\right)$
that \[
\lim_{n\longrightarrow\infty}\frac{\lg\prod_{i=1}^{n}a_{i}\left(x\right)}{\lg q_{n}\left(x\right)}=\frac{12\lg2}{\pi^{2}}\lg\left(K_{0}\right):=\alpha_{0}=0.8325\ldots,\]
 where $\lambda$ denotes the Lebesgue measure restricted to\/ $[0,1]$
and \[
K_{0}:=\prod_{k\in\N}\left(1+\left(k\left(k+2\right)\right)^{-1}\right)^{\log k/\log2}\]
 the Khintchin constant (cf. \cite{Khinchin:56}). Consequently, we
have $\lambda\left(\mathcal{F}_{\alpha_{0}}\right)=1$. 
\end{fact}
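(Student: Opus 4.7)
The plan is to deduce Fact \ref{fact5} from two classical results for the Gauss map: Khintchin's theorem on the geometric mean of the continued fraction digits, and Lévy's theorem on the exponential growth rate of the denominators $q_n$. Both of these follow by applying the Birkhoff ergodic theorem to the Gauss system $(T,\mu_G)$ with the Gauss measure $d\mu_G=\frac{1}{\log 2}\frac{dx}{1+x}$, which is $T$-invariant, ergodic, and equivalent to Lebesgue measure $\lambda$ on $[0,1]$.

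First I would handle the numerator. Since $a_1$ is constant on each rank-one cylinder $\{a_1=k\}$ and $\mu_G(\{a_1=k\})=\frac{1}{\log 2}\log\frac{(k+1)^2}{k(k+2)}$, one gets
\[
\int \log a_1\,d\mu_G=\sum_{k\in\N}\frac{\log k}{\log 2}\log\!\left(1+\frac{1}{k(k+2)}\right)=\log K_0,
\]
which is finite. Birkhoff's ergodic theorem applied to $\log a_1$ (noting $a_i(x)=a_1(T^{i-1}x)$) then yields Khintchin's theorem: for $\mu_G$-a.e. $x$, and hence for $\lambda$-a.e. $x$,
\[
\frac{1}{n}\log\prod_{i=1}^{n}a_i(x)=\frac{1}{n}\sum_{i=1}^{n}\log a_i(x)\;\longrightarrow\;\log K_0.
\]

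Next I would handle the denominator via Lévy's theorem. One computes the Lyapunov exponent of the Gauss system by applying Birkhoff to $\log|T'(x)|=-2\log x$:
\[
\int \log|T'|\,d\mu_G=-\frac{2}{\log 2}\int_{0}^{1}\frac{\log x}{1+x}\,dx=\frac{\pi^2}{6\log 2}.
\]
Using $(T^n)'(x)=\prod_{i=0}^{n-1}T'(T^i x)$ together with the standard bounded-distortion estimate $|(T^n)'(x)|^{-1}\asymp q_n(x)^{-2}$ on the cylinder of $x$ (which in turn comes from the recursion \eqref{eq:pnqn recursion}), I obtain
\[
\frac{1}{n}\log q_n(x)\;\longrightarrow\;\frac{\pi^2}{12\log 2}\quad\text{for }\lambda\text{-a.e. }x.
\]

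Finally, taking the quotient of the two almost-sure limits gives, for $\lambda$-a.e. $x$,
\[
\lim_{n\to\infty}\frac{\log\prod_{i=1}^{n}a_i(x)}{\log q_n(x)}=\frac{\log K_0}{\pi^2/(12\log 2)}=\frac{12\log 2}{\pi^2}\log K_0=\alpha_0,
\]
so $\lambda(\mathcal{F}_{\alpha_0})=1$. The only genuine obstacle is ensuring the two Birkhoff applications are valid: integrability of $\log a_1$ against $\mu_G$ (which is immediate from $\log\frac{(k+1)^2}{k(k+2)}=O(k^{-2})$) and the distortion estimate that converts the Lyapunov exponent into the growth rate of $q_n$; both are classical and purely technical.
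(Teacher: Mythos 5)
Your proposal is correct and follows essentially the same route as the paper: both apply the Birkhoff ergodic theorem for the Gauss measure to the digit potential (giving $\log K_0$, i.e.\ Khintchin's theorem) and to the geometric potential $\log|T'|$ (giving the Lyapunov exponent $\pi^2/(6\log 2)$), and then use the standard comparison of $|(T^n)'|$ (equivalently $S_n\varphi$) with $q_n^{-2}$ -- which the paper has already recorded in the proof of Lemma \ref{lem:pressure is pressure} -- to pass to $\frac{1}{n}\log q_n\to\pi^2/(12\log 2)$ and take the quotient. No gaps; your integrability check for $\log a_1$ and the distortion step are exactly the technical points the paper relies on as well.
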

\begin{fact}
\label{fact6}Also for later use let us define \[
\mathcal{F}_{\alpha}^{*}:=\begin{cases}
\left\{ x\in\mathbb{I}:\limsup_{n\rightarrow\infty}\lg\left(\prod_{i=1}^{n}a_{i}\left(x\right)\right)/\lg q_{n}\left(x\right)\geq\alpha\right\} , & \alpha\geq\alpha_{0},\\
\left\{ x\in\mathbb{I}:\liminf_{n\rightarrow\infty}\lg\left(\prod_{i=1}^{n}a_{i}\left(x\right)\right)/\lg q_{n}\left(x\right)\leq\alpha\right\} , & \alpha<\alpha_{0}.\end{cases}\]
 Then for $\alpha_{q}:=1-\left(q^{2}\log\left(q\right)\right)^{-1}$,
$q>2$, we have \[
\mathcal{I}_{q}:=\left\{ x\in\mathbb{I}:a_{i}\left(x\right)\geq q,\, i\in\N\right\} \subset\mathcal{F}_{\alpha_{q}}^{*}.\]

\end{fact}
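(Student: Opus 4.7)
The plan is to exploit the recursion \eqref{eq:pnqn recursion} to sharpen the upper bound in \eqref{eq:qn versus prod ai einfach} when we know a uniform lower bound $a_i \geq q$. First I would verify that $\alpha_q > \alpha_0$ for $q>2$, so that the claim amounts to showing
\[
\liminf_{n\to\infty}\frac{\lg\prod_{i=1}^{n}a_{i}(x)}{\lg q_{n}(x)} \geq \alpha_q \quad\text{for all }x\in\mathcal{I}_q,
\]
which is in fact stronger than needed for $\mathcal{F}_{\alpha_q}^{*}$.

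The key step is the following refinement of \eqref{eq:qn versus prod ai einfach}. From $q_n = a_n q_{n-1} + q_{n-2}$ and the monotonicity $q_{n-1} \geq a_{n-1} q_{n-2} \geq q\, q_{n-2}$, I get
\[
q_n = a_n q_{n-1}\Bigl(1+\tfrac{q_{n-2}}{a_n q_{n-1}}\Bigr)\leq a_n q_{n-1}\bigl(1+q^{-2}\bigr),
\]
for every $x\in\mathcal{I}_q$. A straightforward induction (base case $q_1=a_1$) then gives
\[
q_n(x)\leq \bigl(1+q^{-2}\bigr)^{n}\prod_{i=1}^{n}a_{i}(x).
\]

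Next I would take logarithms and divide. Since $a_i(x)\geq q$ implies $\lg\prod_{i=1}^n a_i(x)\geq n\lg q$, and since $s\mapsto s/(s+c)$ is increasing in $s$ for $c>0$, I obtain
\[
\frac{\lg\prod_{i=1}^n a_i(x)}{\lg q_n(x)} \;\geq\; \frac{\lg\prod_{i=1}^n a_i(x)}{\lg\prod_{i=1}^n a_i(x)+n\lg(1+q^{-2})} \;\geq\; \frac{\lg q}{\lg q+\lg(1+q^{-2})}.
\]
Finally, using $\lg(1+q^{-2})\leq q^{-2}$ together with the elementary inequality $a/(a+b)\geq 1-b/a$ for $a,b>0$, the right-hand side is bounded below by $1-(q^{2}\lg q)^{-1}=\alpha_q$, which finishes the argument.

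The whole proof is short; the only subtlety I expect to watch is ensuring the estimate $q_{n-2}/q_{n-1}\leq 1/q$ in the first step (for which the base case $n=2$ needs $q_0/q_1 = 1/a_1 \leq 1/q$, which holds since $a_1\geq q$), and confirming numerically that $\alpha_q>\alpha_0$ for the smallest admissible $q=3$, so that the limsup formulation of $\mathcal{F}_{\alpha_q}^{*}$ is the correct one.
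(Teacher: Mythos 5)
Your argument is correct and essentially the same as the paper's: the paper uses the recursion to get $q_n(x)\le\prod_{i=1}^n\left(a_i(x)+1/q\right)$ for $x\in\mathcal{I}_q$, which after taking logarithms yields exactly your correction term $\le n/q^2$ and the same elementary estimate $\ge\left(1+(q^2\log q)^{-1}\right)^{-1}\ge\alpha_q$. Your explicit check that $\alpha_q>\alpha_0$ (so that the $\limsup$ branch of $\mathcal{F}^*_{\alpha_q}$ is the relevant one, reading $q>2$ as integer $q\ge 3$) is a point the paper leaves implicit, and is a sensible addition rather than a deviation.
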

\begin{fact}
\label{fact7}For $x\in\mathcal{F}_{1}$ the sequence\/ $\left(a_{i}\left(x\right)\right)$
is necessarily unbounded. This is to say that the set $\mathcal{F}_{1}$
is contained in the complement of the set $\mathcal{B}$ of badly
approximable numbers. 
\end{fact}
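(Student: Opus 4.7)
I would argue by contraposition: assume $(a_i(x))_{i\in\N}$ is bounded, say $a_i(x)\le M$ for all $i$, and deduce that $x\notin\mathcal{F}_1$. This also yields the second claim, since the set $\mathcal{B}$ of badly approximable numbers coincides with the set of irrationals having bounded partial quotients.

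The plan is to exploit the recursion \eqref{eq:pnqn recursion} to write $\lg q_n$ as a telescoping sum. Setting $r_k:=q_{k-1}(x)/q_k(x)$ with the convention $r_0=0$, the recursion $q_k=a_k q_{k-1}+q_{k-2}$ gives
\begin{equation*}
\lg q_n(x)=\sum_{k=1}^{n}\lg\!\bigl(a_k(x)+r_{k-1}\bigr),\qquad \lg q_n(x)-\lg\prod_{i=1}^{n}a_i(x)=\sum_{k=1}^{n}\lg\!\bigl(1+r_{k-1}/a_k(x)\bigr).
\end{equation*}
The key input is a \emph{uniform} positive lower bound on the $r_k$: using the recursion once more together with $q_{k-2}\le q_{k-1}$ and $a_k\le M$ one obtains $q_k\le (M+1)q_{k-1}$ for $k\ge 1$, whence $r_k\ge 1/(M+1)$ for $k\ge 1$.

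Substituting these bounds gives, on one side,
\begin{equation*}
\lg q_n(x)-\lg\prod_{i=1}^{n}a_i(x)\;\ge\;(n-1)\,\lg\!\left(1+\frac{1}{M(M+1)}\right),
\end{equation*}
while on the other side $\lg q_n(x)\le n\lg(M+1)$ follows from the upper end of the same estimate on $a_k+r_{k-1}$. Dividing, the ratio $\lg\prod_{i=1}^{n}a_i(x)/\lg q_n(x)$ is bounded above, for all large $n$, by a constant strictly less than $1$; hence $x\notin\mathcal{F}_1$, as desired.

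I do not anticipate any genuine obstacle: the argument is essentially a bookkeeping comparison between $\lg q_n$ and $\lg\prod a_i$ through the recursion. The only spot that calls for a moment of care is the base case in the bound $q_k\le (M+1)q_{k-1}$, which is immediate from $q_0=1$ and $q_1=a_1(x)\le M$; the rest is routine manipulation.
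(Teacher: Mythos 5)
Your proof is correct and follows essentially the same route as the paper: contraposition from a uniform bound $a_i\le M$ together with the recursion-based estimate $q_{k-1}/q_k\ge 1/(M+1)$, which is exactly the content of the paper's Lemma \ref{lem:qn vs prod ai better lower bound} (there stated as $q_n\ge a_1\prod_{i=2}^{n}a_i\left(1+\frac{1}{a_i(a_{i-1}+1)}\right)$), yielding a per-step gain of $\lg\left(1+\frac{1}{M(M+1)}\right)$ that bounds the ratio away from $1$. The only cosmetic difference is that you normalize the gap by $\lg q_n\le n\lg(M+1)$, whereas the paper normalizes by $\sum_{i=1}^{n}\lg a_i\le n\lg M$ (hence its assumption $M\ge2$).
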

The Hausdorff dimension $\dim_{H}\left(\mathcal{F}_{\alpha}\right)$
is an appropriate quantity to measure the size of the sets $\mathcal{F}_{\alpha}$.
In this paper we will give a complete analysis of the \emph{ arithmetic-geometric
scaling spectrum} \[
f\left(\alpha\right):=\dim_{H}\left(\mathcal{F}_{\alpha}\right),\qquad\alpha\in\R.\]

We already know from Fact \ref{fact5} that the maximal Hausdorff
dimension $f\left(\alpha\right)=1$ is attained for $\alpha=\alpha_{0}$
and $f$ is zero outside of $\left[0,1\right]$. Since the noble numbers
have Hausdorff dimension zero there is some evidence that $f\left(0\right)=0$.
In fact, both boundary points $0$ and $1$ will need some extra attention
concerning this analysis.

Using the Thermodynamic Formalism we will be able to express the function
$f$ on $[0,1]$ implicitly in terms of the \emph{arithmetic-geometric
pressure} \emph{function} \[
P\left(t,\beta\right):=\lim_{n\rightarrow\infty}\frac{1}{n}\lg\sum_{\omega\in\N^{n}}q_{n}\left(\omega\right)^{-2t}\prod_{i=1}^{n}\omega_{i}^{-2\beta},\quad t,\beta\in\R,\]
We shall see in Lemma \ref{lem:pressure is pressure} that the limit
defining $P$ always exists as an element of $\mathbb{R}\cup\left\{ +\infty\right\} $.
By Proposition \ref{pro:exTf(beta)} we have that for every $\beta\in\R$
there exists a unique number $t=t\left(\beta\right)$, such that $P\left(t\left(\beta\right),\beta\right)=0$.
We denote by $\beta\mapsto t\left(\beta\right)$ the \emph{arithmetic-geometric
free energy function} (see Fig. \ref{fig:The-arithmetic-geometric-fluctuation}).
For any real convex function $g$ we let $\hat{g}\colon\R\to\R\cup\left\{ \infty\right\} $
denote the \emph{Legendre transform of $g$} given by $\hat{g}\left(p\right):=\sup_{c\in\R}\left\{ cp-g(c)\right\} $,
$p\in\R$. Now we are in the position to state our main theorem.

\begin{figure}
\includegraphics[width=1\textwidth]{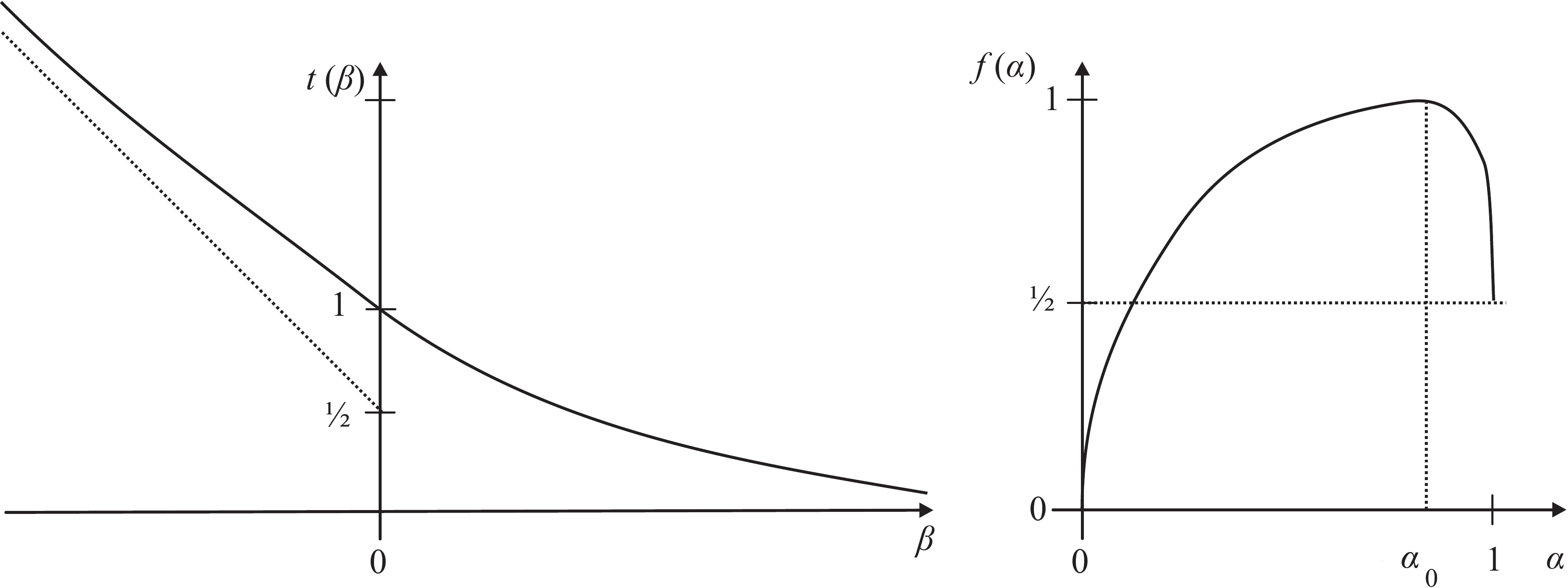}

\caption{\label{fig:The-arithmetic-geometric-fluctuation}The arithmetic-geometric
free energy function $t$ and the associated multifractal scaling
spectrum $f$.}

\end{figure}

\begin{thm}
\label{thm:maintheorem}The Hausdorff dimension spectrum (cf. Fig.
\ref{fig:The-arithmetic-geometric-fluctuation}) for the arith\-me\-tic-ge\-ome\-tric
scaling is given by \[
f\left(\alpha\right)=\max\left\{ -\hat{t}\left(-\alpha\right),0\right\} =\dim_{H}\left(\mathcal{F}_{\alpha}^{*}\right),\qquad\alpha\in\R.\]
 \textup{\emph{The function $f\big|_{\left[0,1\right]}$ is strictly
convex, continuous, and real-analytic on}}\/\textup{\emph{ $\left(0,1\right)$.
It attains its maximal value}}\/\textup{\emph{ $1$ in $\alpha_{0}=12\pi^{-2}\lg\left(2\right)\lg\left(K_{0}\right)$,
where $K_{0}$ denotes the Khintchin constant. For the boundary points
we have \[
f\left(0\right)=0,\:\: f\left(1\right)=1/2,\:\:\mbox{and }\:\lim_{\alpha\searrow0}f'\left(\alpha\right)=+\infty,\:\:\lim_{\alpha\nearrow1}f'\left(\alpha\right)=-\infty.\]
}}  
\end{thm}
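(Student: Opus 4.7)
My starting point would be to recognise the Gauss map as the infinite conformal iterated function system generated by the inverse branches $\phi_k(x) = 1/(x+k)$, $k\in\N$, and to invoke the Mauldin--Urbanski thermodynamic formalism in that setting. The standard distortion estimate $q_n(\omega)^{-2}\asymp\lambda(I_\omega)$ for the $n$-th level cylinder $I_\omega$ identifies $P(t,\beta)$ with the topological pressure of a two-parameter H\"older family of potentials, the first factor being the geometric potential $-2t \log |T'|^{1/2}$ and the second the arithmetic observable $-2\beta\log a_1$. This gives existence of the limit in the definition of $P$, together with real-analyticity and strict convexity of $P$ on the interior of its finiteness locus, which a comparison with $\sum_k k^{-2(t+\beta)}$ suggests is $\{(t,\beta):t+\beta>1/2\}$. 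The implicit function theorem applied to $P(t(\beta),\beta)=0$ then produces $t(\beta)$ real-analytic and strictly convex on an open interval, with $t(0)=1$ recovering the Bowen dimension of the Gauss system and $\alpha_0 = -t'(0)$ identified with the Khintchin value via Fact \ref{fact5}.

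\textbf{Paragraph 2 (Lower bound via equilibrium measures).} For each $\beta$ in the analyticity interval, the Mauldin--Urbanski equilibrium state $\mu_\beta$ of the normalised potential at $(t(\beta),\beta)$ exists, is shift-invariant and ergodic, and satisfies the Gibbs property. By the Birkhoff theorem applied simultaneously to $\log q_1$ and $\log a_1$,
\[
\alpha(\beta) := -t'(\beta) = \frac{\int \log a_1\, d\mu_\beta}{\int \log q_1\, d\mu_\beta},
\]
and $\mu_\beta(\mathcal{F}_{\alpha(\beta)}) = 1$. The dimension formula $\dim_H\mu_\beta = h(\mu_\beta)/(2\int\log q_1\, d\mu_\beta)$ combined with $P(t(\beta),\beta) = 0$ reorganises algebraically into $\dim_H\mu_\beta = t(\beta) + \beta\alpha(\beta) = -\widehat t(-\alpha(\beta))$, yielding $f(\alpha) \ge -\widehat t(-\alpha)$ for every $\alpha$ in the image of $-t'$.

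\textbf{Paragraph 3 (Upper bound and $\mathcal{F}_\alpha^*$).} For the matching upper bound on $\mathcal{F}_\alpha^*$, and hence on $\mathcal{F}_\alpha \subset \mathcal{F}_\alpha^*$, I would cover $\mathcal{F}_\alpha^*$ by generation-$n$ cylinders and use Markov's inequality together with the appropriate $\limsup$ or $\liminf$ condition to restrict to those words $\omega$ on which the Gibbs weight $q_n(\omega)^{-2t(\beta)}\prod_i\omega_i^{-2\beta}$ realises the exponential scale prescribed by $\alpha$. The zero-pressure identity $P(t(\beta),\beta) = 0$ then bounds the total cover mass at exponent $s = -\widehat t(-\alpha) + \varepsilon$, and the standard Olsen/Pesin--Weiss covering argument, adapted to the infinite-alphabet setting, gives $\dim_H\mathcal{F}_\alpha^* \le -\widehat t(-\alpha)$. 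Choosing $\beta = \beta(\alpha)$ optimally and combining with Paragraph~2 closes the case $\alpha \in (0,1)$.

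\textbf{Paragraph 4 (Boundary points, derivatives, main obstacle).} For $\alpha = 0$ the inequality $\prod a_i \le q_n$ forces $x \in \mathcal{F}_0$ to have asymptotically subexponential arithmetic mean of digits; a reduction to bounded-alphabet subsystems, whose Texan--McMullen dimensions tend to zero as the alphabet shrinks, together with Fact \ref{fact2} yields $f(0) = 0$. At $\alpha = 1$ the inclusion $\mathcal{G} \subset \mathcal{F}_1$ from Fact \ref{fact4} combined with the classical Good--Jarn\'{\i}k theorem $\dim_H\mathcal{G} = 1/2$ gives $f(1) \ge 1/2$; the matching upper bound is obtained by letting $\beta$ approach the boundary of the finiteness locus of $P$ and using Fact \ref{fact6} as a benchmark to show that $\lim_{\alpha\nearrow 1}-\widehat t(-\alpha) = 1/2$. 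The infinite one-sided slopes $f'(0^+) = +\infty$ and $f'(1^-) = -\infty$ correspond directly to phase transitions of $t(\beta)$ at the endpoints of its domain. I expect the main obstacle to be Paragraph~3 near $\alpha = 1$: the infinite alphabet means that naive cylinder covers are dominated by the contribution of a single very large digit, so one must establish uniform tail control on the Gibbs sums, and in parallel one must track the behaviour of $t(\beta)$ and $\mu_\beta$ as $\beta$ approaches the boundary of finiteness precisely enough to extract the limiting dimension $1/2$.
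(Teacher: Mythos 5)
Your Paragraphs 1--3 follow essentially the same route as the paper: identify $P(t,\beta)$ with the topological pressure of $t\varphi+\beta\psi$ via the distortion estimate $\diam(\pi^{-1}[\omega|_n])\asymp q_n(\omega)^{-2}$, get $t(\beta)$ from the implicit function theorem (strict convexity needing, in addition, that $\varphi,\psi$ are not cohomologously dependent), prove the lower bound with the invariant Gibbs/equilibrium state and the Volume Lemma, and the upper bound for $\mathcal{F}_\alpha^*$ by a cylinder cover weighted by the zero-pressure identity. That part is sound and matches the paper.

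The genuine gap is in Paragraph 4, i.e.\ precisely at the boundary values, which are part of the statement. First, for $f(0)=0$ your proposed reduction to bounded-alphabet subsystems fails as a containment: $\mathcal{F}_0$ is not contained in (a countable union of) sets of points with digits bounded by a small $M$ --- e.g.\ a point with digit $k$ at position $2^k$ and digit $1$ elsewhere has unbounded digits yet lies in $\mathcal{F}_0$, and the bounded-digit sets $\{a_i\le M\}$ have dimension close to $1$, not $0$, unless $M$ is forced down to $1$, which the condition $\alpha=0$ does not do. The paper instead gets $f(0)=0$ from the upper bound $\dim_H(\mathcal{F}_\alpha)\le\inf_\beta\{t(\beta)+\beta\alpha\}$ together with $0=P(t(\beta),\beta)\le -2t(\beta)\log\gamma+P(0,\beta)$ and $P(0,\beta)=\lg\zeta(2\beta)\to 0$, so $t(\beta)\to 0$ as $\beta\to+\infty$. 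Second, and more seriously, for the upper bound $f(1)\le 1/2$ (and $\limsup_{\alpha\nearrow1}f(\alpha)\le 1/2$) your plan of ``using Fact \ref{fact6} as a benchmark'' cannot work: Fact \ref{fact6} together with Ramharter's estimate only produces \emph{lower} bounds on $\dim_H(\mathcal{F}^*_\alpha)$. What is needed is the sharp upper asymptotics $t(\beta)\le 1/2-\beta+o(1)$ as $\beta\to-\infty$, and this is exactly where the naive inequalities $\prod_i a_i\le q_n\le 2^n\prod_i a_i$ are insufficient (they only give $\lg\zeta(2(t+\beta))-2t\lg 2\le P(t,\beta)\le\lg\zeta(2(t+\beta))$ for $t>0$, which is compatible with $t(\beta)+\beta$ staying bounded away from $1/2$). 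The paper supplies the missing ingredient through the refined recursion estimate $q_n(\omega)\ge\omega_1\prod_{i\ge2}\omega_i\bigl(1+\frac{1}{\omega_i(\omega_{i-1}+1)}\bigr)$ (Lemma \ref{lem:qn vs prod ai better lower bound}), the resulting two-variable pressure bound (Proposition \ref{pro:p estimates}), and the quantitative Lemma \ref{lem:tAsymptoticFor-infty}; you correctly flag this as the main obstacle but do not provide the idea that resolves it, so the endpoint claims $f(0)=0$ and $f(1)=1/2$ remain unproved in your outline.
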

The remaining part of this section is devoted to the significance
of the particular value $f\left(1\right)=\dim_{H}\left(\mathcal{F}_{1}\right)=1/2$.
We have already noticed that $\mathcal{F}_{1}$ contains the set $\mathcal{G}$
of points $x\in\mathbb{I}$ with continued fraction entries $a_{i}\left(x\right)$
tending to infinity. For this set Good proved in \cite{MR0004878}
that \begin{equation}
\dim_{H}\mathcal{G}=1/2.\label{eq:increasing sequ dim 1 2}\end{equation}
Since $\mathcal{F}_{1}\supset\mathcal{G}$, Good's results provides
us with a lower but not with an upper bound for $f\left(1\right)$.
In \cite{kessemultifractalsternbrocotMR2338129} it has been shown,
that the Hausdorff dimension of sets with large geometric scaling
coefficients are close to $1/2$, i.e. \[
\dim_{H}\left\{ x\in\left(0,1\right):\lim_{n\rightarrow\infty}\frac{2\lg q_{n}\left(x\right)}{n}=\alpha\right\} \to1/2,\quad\alpha\to\infty.\]
 Similarly, in \cite{Fan:arXiv0802.3433} we find for $\alpha>1$
and $\beta>0$, \[
\dim_{H}\left\{ x\in\left(0,1\right):\lim_{n\rightarrow\infty}\frac{\lg q_{n}\left(x\right)}{n^{\alpha}}=\beta\right\} =1/2.\]
 Ramharter has shown in \cite{MR856971} that also for every $q\in\N$
we have \[
\dim_{H}\left\{ x\in\mathbb{I}:a_{i}\left(x\right)\ge q;a_{i}\left(x\right)\neq a_{j}\left(x\right)\:\mbox{for all }i\neq j\right\} =1/2.\]
 Other results interesting in this context can be found in \cite{MR1269292},
\cite{MR1067484}, \cite{MR0311581} and \cite{MR0258778} . Furthermore,
in \cite{MR856971} we find that for $q\to\infty$ \begin{equation}
\dim_{H}\mathcal{I}_{q}=\frac{1}{2}+\mbox{O}\left(\frac{\lg\lg q}{\log q}\right),\label{eq:Ramharter}\end{equation}
 where $\mbox{O}$ denotes the usual Landau symbol, i.e. $f\left(x\right)=\mbox{O}\left(g\left(x\right)\right)$
for $x\to a$ if there exists a constant $c>0$ such that $f\left(x\right)\leq cg\left(x\right)$
for all $x$ in a neighbourhood of $a$. With some extra effort we
are able to improve (\ref{eq:Ramharter}) and obtain the precise asymptotic
of this convergence. Here $a\left(n\right)\sim b\left(n\right)$ stands
for $a\left(n\right)/b\left(n\right)\to1$ for $n\to\infty$.
\begin{prop}
\label{pro:ramharter prop}For $q\to\infty$ we have\[
\dim_{H}\mathcal{I}_{q}-\frac{1}{2}\sim\frac{1}{2}\frac{\log\log q}{\log q}.\]

\end{prop}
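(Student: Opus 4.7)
The plan is to apply Bowen's formula (in the Mauldin--Urba\'nski framework for infinite conformal iterated function systems) to identify $s_q:=\dim_{H}\mathcal{I}_q$ as the unique zero of the restricted pressure
\[
P_q(s) \;:=\; \lim_{n\to\infty}\frac{1}{n}\log\sum_{\omega\in\{q,q+1,\ldots\}^{n}} q_n(\omega)^{-2s},
\]
and then to reduce the equation $P_q(s)=0$ to an elementary Dirichlet-type tail equation whose asymptotic solution yields the claimed rate. Since the Gauss system restricted to the sub-alphabet $\{q,q+1,\ldots\}$ satisfies the standard separation and bounded-distortion hypotheses and the pressure is finite for every $s>1/2$, Bowen's formula applies directly and gives $s_q\geq 1/2$.

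The first key reduction is to replace $q_n(\omega)$ by the arithmetic product $\prod_i a_i$. Iterating the recursion (\ref{eq:pnqn recursion}) together with the trivial bound $q_{n-2}/q_{n-1}\leq 1/a_{n-1}$ yields
\[
\prod_{i=1}^{n} a_i \;\leq\; q_n(\omega) \;\leq\; \prod_{i=1}^{n} a_i\cdot\prod_{i=2}^{n}\Bigl(1+\tfrac{1}{a_{i-1}a_i}\Bigr).
\]
For $\omega\in\{q,q+1,\ldots\}^{n}$ this gives $q_n(\omega)=\prod_i a_i\cdot(1+O(q^{-2}))^{n}$; summing over all such words one obtains
\[
P_q(s) \;=\; \log\sum_{a\geq q} a^{-2s} \;+\; O(s/q^{2}),
\]
so that $P_q(s_q)=0$ is equivalent, up to an $o(1)$ perturbation, to the single transcendental equation $\sum_{a\geq q} a^{-2s_q}=1$.

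The core analytic step is then the asymptotic inversion of this tail equation. Writing $u:=2s-1$ and using the integral comparison $\sum_{a\geq q} a^{-1-u}=q^{-u}/u+O(q^{-1-u})$, the equation becomes $u\,q^{u}=1+o(1)$, i.e.\ $u\log q=\log(1/u)+o(1)$. The substitution $v:=1/u$ transforms this into $v\log v=\log q+o(1)$, and iterative inversion (substituting $\log v\sim\log\log q$ back) yields $v\sim\log q/\log\log q$. Therefore $u\sim\log\log q/\log q$, and
\[
s_q-\tfrac{1}{2} \;=\; \tfrac{1}{2}u \;\sim\; \tfrac{1}{2}\,\frac{\log\log q}{\log q},
\]
which is the desired asymptotic.

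The main obstacle is not the leading-order calculation but the verification that the various error terms -- the $O(q^{-2})$ from the $q_n$-versus-$\prod a_i$ comparison and the $O(q^{-1-u})$ from the integral approximation of the Dirichlet tail -- perturb the solution $u$ of $uq^{u}=1$ by only $o(\log\log q/\log q)$. This holds because the derivative of $u\mapsto u\log q+\log u$ at the solution is of order $\log q$, so perturbations of size $O(q^{-2})$ in the pressure move $u$ by only $O(1/(q^{2}\log q))$, which is negligible compared with $\log\log q/\log q$; this is precisely the improvement needed to upgrade Ramharter's $O$-bound (\ref{eq:Ramharter}) to the sharp equivalence $\sim$.
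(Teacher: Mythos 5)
Your proposal is correct and follows essentially the same route as the paper: Bowen's formula for the Gauss system restricted to the alphabet $\{q,q+1,\ldots\}$, reduction of the pressure equation to a zeta-tail equation via the comparison of $q_n(\omega)$ with $\prod_i a_i$, integral comparison, and asymptotic inversion of $u\,q^{u}\approx 1$. The only cosmetic difference is that you use the sharper bound $q_n\le\prod_i a_i\prod_{i\ge 2}\bigl(1+\tfrac{1}{a_{i-1}a_i}\bigr)$ with explicit error control, whereas the paper gets by with $\prod_k a_k\le q_n\le\prod_k(a_k+1)$, sandwiching $1=e^{P_q}$ between the tails $\sum_{k\ge q+1}$ and $\sum_{k\ge q}$.
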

We would like to remark that this result is rather complementary to
the Texan conjecture (proved in \cite{KessZhu:06}), which claims
that the set of Hausdorff dimensions of bounded type continued fraction
sets is dense in the unit interval. Already Jarník observed in \cite{Jarnik:29}
that for the set of bounded continued fractions we have \[
\dim_{H}\left\{ x\in\mathbb{I}:\forall i\in\N\; a_{i}\left(x\right)\leq M\right\} =1-\mbox{O}\left(1/M\right).\]
 This was later significantly improved by Hensley, who gave a precise
asymptotic up to $\mbox{O}\left(M^{-2}\right)$ in \cite{Hensley92}.

As an interesting application of our multifractal analysis we are
able to give an asymptotic formula for the Hausdorff dimension of
$\mathcal{F}_{\alpha}$ as $\alpha$ approaches $1$. Let us write
$a\left(x\right)=\Theta(b(x))$ for $x\nearrow a$ if there exist
constants $0<c_{1}\leq c_{2}$ such that $c_{1}b\left(x\right)\leq a\left(x\right)\leq c_{2}b\left(x\right)$
for all $x$ in a (left) neighbourhood of $a$.
\begin{thm}
\label{thm:Asymp}For $\alpha\nearrow1$ we have \[
f(\alpha)=\frac{1}{2}+\Theta\left(\frac{\log\log\left(1/\left(1-\alpha\right)\right)}{\log\left(1/\left(1-\alpha\right)\right)}\right).\]
\end{thm}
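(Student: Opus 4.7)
My plan is to establish matching lower and upper bounds on $f(\alpha)-1/2$, both of order $\log\log(1/\epsilon)/\log(1/\epsilon)$, where $\epsilon := 1-\alpha$.

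The lower bound I would obtain by combining Fact \ref{fact6} with Proposition \ref{pro:ramharter prop}. Given $\alpha \nearrow 1$, let $q = q_\alpha \in \N$ be the smallest integer with $q^2\log q \geq 1/\epsilon$, so that $\alpha_q \geq \alpha$. The chain of inclusions $\mathcal{F}_\alpha^* \supset \mathcal{F}_{\alpha_q}^* \supset \mathcal{I}_q$ together with Theorem \ref{thm:maintheorem} gives $f(\alpha) \geq \dim_H\mathcal{I}_q$. Elementary estimates produce $\log q \sim \tfrac12\log(1/\epsilon)$ and $\log\log q \sim \log\log(1/\epsilon)$, and then Proposition \ref{pro:ramharter prop} delivers
\[ f(\alpha) - \tfrac12 \;\geq\; \dim_H\mathcal{I}_q - \tfrac12 \;\sim\; \frac{\log\log(1/\epsilon)}{\log(1/\epsilon)}. \]

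For the upper bound I would use the Legendre-transform formula $f(\alpha) = \inf_\beta\{\alpha\beta + t(\beta)\}$ from Theorem \ref{thm:maintheorem}. Setting $\beta = -C$ with $C>0$ and $\delta(C) := t(-C)-C-\tfrac12$, one has $\delta(C) \searrow 0$ as $C \to \infty$ (because $\inf_\beta(\beta+t(\beta)) = f(1) = \tfrac12$), so that $f(\alpha) - \tfrac12 \leq \epsilon C + \delta(C)$. Granted the asymptotic $\delta(C) = O(\log\log C/\log C)$, balancing $\epsilon C$ against $\delta(C)$ with the choice $C \asymp \log\log(1/\epsilon)/\bigl(\epsilon(\log(1/\epsilon))^2\bigr)$ produces $f(\alpha)-\tfrac12 = O(\log\log(1/\epsilon)/\log(1/\epsilon))$.

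The main obstacle is thus the asymptotic estimate $\delta(C) = O(\log\log C/\log C)$, i.e., the behaviour of $\beta + t(\beta) - \tfrac12$ as $\beta \to -\infty$. I would extract it from the Ruelle-Perron-Frobenius operator
\[ L_{t,-C}f(x) = \sum_{k\geq 1} k^{2C}(x+k)^{-2t}\,f\bigl(1/(x+k)\bigr), \]
whose leading eigenvalue equals $e^{P(t,-C)}$. At $t = C+\tfrac12+\delta$ one tests the ansatz $h(x) = x^{-\delta}$: a Laplace-type substitution $u = 2Cx/k$ reduces the sum to $Lh(x) \sim \Gamma(\delta)(2C)^{-\delta} h(x)$, so the scalar eigenvalue equation is $\Gamma(\delta)(2C)^{-\delta} \sim 1$, equivalently $\delta\log(2C) \sim \log(1/\delta)$, whose iterative solution is $\delta \sim \log\log C/\log C$. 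Making this argument rigorous---controlling the genuine eigenfunction in an appropriate Banach space and deriving a two-sided spectral bound---is the technical bulk of the work, and should parallel the operator methods already used to prove Proposition \ref{pro:ramharter prop}.
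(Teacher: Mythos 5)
Your lower bound is exactly the paper's argument: $\mathcal{I}_q\subset\mathcal{F}^*_{\alpha_q}\subset\mathcal{F}^*_\alpha$ via Fact \ref{fact6}, the identification $f(\alpha)=\dim_H(\mathcal{F}^*_\alpha)$ from Theorem \ref{thm:maintheorem}, and Proposition \ref{pro:ramharter prop} with $q^2\log q\asymp 1/(1-\alpha)$, so that part is fine. The upper bound also has the right skeleton (bound $f(\alpha)\le t(\beta)+\beta\alpha$ at a suitably large negative $\beta$ and balance), which is what the paper does; the difference is the key input. The paper supplies it by Lemma \ref{lem:tAsymptoticFor-infty}, proved by elementary series manipulation of the explicit pressure bound in Proposition \ref{pro:p estimates} (which in turn rests on the refined recursion estimate of Lemma \ref{lem:qn vs prod ai better lower bound}); note that a very crude, doubly exponential choice $\beta(\epsilon)$ suffices there, because $\beta$ enters the final balancing only through $\log|\beta|$. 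You instead assert the genuinely sharper asymptotic $t(-C)-C-\tfrac12=O(\log\log C/\log C)$ and support it only with an eigenfunction ansatz $h(x)=x^{-\delta}$ and a Laplace-type substitution, explicitly deferring the rigorous spectral argument. That is the gap: the entire upper half of the theorem hinges on this estimate, and your sketch leaves unresolved precisely the delicate points (the test function is unbounded at $0$, the Laplace approximation is not uniform in $x\in(0,1]$, and one must convert a sub/super-eigenfunction inequality into a bound on the pressure $P(t,-C)$). Moreover, your stated plan to "parallel the operator methods already used to prove Proposition \ref{pro:ramharter prop}" misreads the paper: that proposition is proved by Bowen's formula plus elementary comparisons $\prod a_k\le q_n\le\prod(a_k+1)$, not by spectral analysis, so there is no ready-made operator framework to lean on.

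The good news is that your claimed asymptotic is true and the balancing step you describe would then close the argument; but as written the proposal does not prove it. The economical fix is to abandon the transfer-operator route and extract the needed bound directly from the upper estimate in Proposition \ref{pro:p estimates}: at $t=C+\tfrac12+\delta$, $\beta=-C$, the double sum is essentially $\sum_{kl\gtrsim C}(kl)^{-1-2\delta}$, and making it $<1$ forces only $\delta\gtrsim\log\log C/\log C$ --- this is exactly the content (in parametrized form) of Lemma \ref{lem:tAsymptoticFor-infty}, and even the weaker explicit version there already yields the $\Theta$-statement after your balancing.
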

\begin{rem}
Actually, the constants in the definition of $\Theta$ can be chosen
to be any $0<c_{1}<1$ and $c_{2}>2$.

In virtue of Fact \ref{fact6} there is a connection between Theorem
\ref{thm:Asymp} and Proposition \ref{pro:ramharter prop}, which
will be employed in the proof of Theorem \ref{thm:Asymp}.

We would finally like to remark that the arithmetic-geometric scaling
allows an interpretation in terms of the geodesic flow on the modular
surface. More precisely, the term $\sum\log a_{i}$ measures the homological
windings around the cusp, whereas $\log q_{n}$ stands for the total
geodesic length. The set $\mathbb{I}$ is regarded as the set of directions
for a given observation point. This connection allows a generalisation
of our formalism also to modular forms similar to \cite{kessehomologyatinfinityMR2337557}. 
\end{rem}

\section{Thermodynamic Formalism for the Gauss system}

\subsection{The Gauss system and Diophantine analysis}

The process of writing an element of $\mathbb{I}$ in its unique continued
fraction expansion can be restated by a hyperbolic dynamical system
given by the Gauss map $T$ defined in (\ref{eq:GaussMap}). The Gauss
map is conjugated to the left shift $\sigma\colon\N^{\N}\to\N^{\N}$,
$\left(\sigma\left(\omega\right)\right)_{i}=\omega_{i+1}$ for $\omega\in\N^{\N}$
via $\pi$, i.e. we have the following commutative diagram

\[
\xymatrix{\mathbb{I}\ar[r]^{T}\ar[d]_{\pi} & \mathbb{I}\ar[d]^{\pi}\\
\N^{\N}\ar[r]_{\sigma} & \N^{\N}}
\]

The Gauss system allows alternatively a representation as an infinite
conformal Iterated Function System as defined in \cite{urbanskimauldin-gdmsMR2003772}.
The system is given by the compact metric space $\left[0,1\right]$
together with the inverse branches $\Phi_{n}\colon\left[0,1\right]\to\left[0,1\right]$,
$x\mapsto\left(n+x\right)^{-1}$, $n\in\N$, of the Gauss map. Notice
that the family of maps $\left(\Phi_{n}\Phi_{m}\right)_{n,m}$ is
uniformly contracting. We are now aiming at expressing the arithmetic-geometric
scaling limit in dynamical terms. For this we introduce the two potential
functions \[
\psi\colon\N^{\N}\to\R_{0}^{-},\:\omega\mapsto-2\lg\omega_{1}\;\mbox{ and }\;\varphi\colon\N^{\N}\to\R^{-},\:\omega\mapsto-\lg\left(|T'\left(\pi^{-1}\left(\omega\right)\right)|\right),\]

where $\psi$ describes the arithmetic properties, while $\varphi$
describes the geometric properties of the continued fraction expansion.
We will equip $\N^{\N}$ with the metric $d$ given by $d(\omega,\tau):=\exp\left(-|\omega\wedge\tau|\right),$
where $|\omega\wedge\tau|$ denotes the length of the longest common
initial block of $\omega$ and $\tau$. Since $\psi$ is locally constant
we immediately see that $\psi$ is Hölder continuous with respect
to this metric. Next, we want to show that also $\varphi$ is Hölder
continuous. We start with an important observation connecting the
arithmetic and geometric properties of the continued fraction expansion.
For two sequences $\left(a_{n}\right)$, $\left(b_{n}\right)$ we
will write $a_{n}\ll b_{n}$, if $a_{n}\le Kb_{n}$ for some $K>0$
and all $n\in\N$, and if $a_{n}\ll b_{n}$ and $b_{n}\ll a_{n}$
then we write $a_{n}\asymp b_{n}$. For $\omega\in\N^{\N}$ and $n\in\N$
let $\omega|_{n}:=\left(\omega_{1},\dots,\omega_{n}\right)$ and let
$[\omega|_{n}]:=\left\{ \tau\in\N^{\N}:\tau_{1}=\omega_{1},\dots,\tau_{n}=\omega_{n}\right\} $
denote the $n$-\emph{cylinder} of $\omega$. Since we always have
$p_{n-1}\left(\omega\right)q_{n}\left(\omega\right)-p_{n}\left(\omega\right)q_{n-1}\left(\omega\right)=\left(-1\right)^{n}$
and \[
\pi^{-1}\left(\left[\omega|_{n}\right]\right):=\begin{cases}
\vspace{2pt}\left(\frac{p_{n}\left(\omega\right)}{q_{n}\left(\omega\right)},\frac{p_{n}\left(\omega\right)+p_{n-1}\left(\omega\right)}{q_{n}\left(\omega\right)+q_{n-1}\left(\omega\right)}\right)\cap\mathbb{I} & \mbox{for }n\,\mbox{ even,}\\
\left(\frac{p_{n}\left(\omega\right)+p_{n-1}\left(\omega\right)}{q_{n}\left(\omega\right)+q_{n-1}\left(\omega\right)},\frac{p_{n}\left(\omega\right)}{q_{n}\left(\omega\right)}\right)\cap\mathbb{I} & \mbox{for }n\,\mbox{ odd},\end{cases}\]
 it follows that\[
\diam\left(\pi^{-1}[\omega_{|n}]\right)=q_{n}\left(\omega\right)^{-1}\left(q_{n}\left(\omega\right)+q_{n-1}\left(\omega\right)\right)^{-1}=q_{n}\left(\omega\right)^{-2}\left(1+\frac{q_{n-1}\left(\omega\right)}{q_{n}\left(\omega\right)}\right)^{-1}\]
for all $n\in\N$ (see e.g. \cite{Khinchin:56}). This gives \begin{equation}
\diam\left(\pi^{-1}[\omega_{|n}]\right)\asymp\frac{1}{q_{n}^{2}\left(\omega\right)}\label{eq:diam comparable 1 qn2}\end{equation}
where the constants are independent of $\omega\in\N^{\N}$. With $f_{n}$
denoting the $n$-th Fibonacci number we have that $q_{n}\left(\omega\right)\geq f_{n}\gg\gamma^{n}$,
where $\gamma:=\left(\sqrt{5}+1\right)/2$ refers to the \emph{Golden
Mean}. Fix $v,w\in[\omega]$ for some $\omega\in\N^{n}$. Then, using
(\ref{eq:diam comparable 1 qn2}), we get \begin{eqnarray*}
\left|\varphi\left(v\right)-\varphi\left(w\right)\right| & = & 2\left|\log\left(\pi^{-1}v\right)-\log\left(\pi^{-1}w\right)\right|=2\left|\log\left(1+\frac{\pi^{-1}w-\pi^{-1}v}{\pi^{-1}v}\right)\right|\\
 & \ll & \frac{q_{n}\left(\omega\right)}{p_{n}\left(\omega\right)}q_{n}\left(\omega\right)^{-2}\ll d\left(v,w\right)^{2\log\gamma},\end{eqnarray*}
which proves the Hölder continuity of $\varphi$. From this we also
deduce the so-called \emph{bounded distortion property}\begin{equation}
\frac{\left|\phi_{\omega|_{n}}'\left(x\right)\right|}{\left|\phi_{\omega|_{n}}'\left(y\right)\right|}\asymp1,\label{eq:boundedDistortion1}\end{equation}
where $\phi_{\omega|_{n}}:=\phi_{\omega_{1}}\circ\cdots\circ\phi_{\omega_{n}}$
and the constants are independent of $\omega\in\N^{\N}$ and $x,y\in\mathbb{I}$.
The bounded distortion property in particular implies $\left|\phi_{\omega|_{n}}'\left(x\right)\right|\asymp\diam\left(\pi^{-1}[\omega|_{n}]\right).$
Using this it is possible to compare the diameters of cylinder sets
with orbit sums $S_{n}\phi:=\sum_{k=0}^{n-1}\phi\circ\sigma^{k}$
with respect to the geometric potential $\varphi$ under iterations
of the shift map $\sigma$ . In fact, by the chain rule and (\ref{eq:boundedDistortion1})
we have uniformly for $\omega\in\N^{\N}$ and $\tau\in\left[\omega|_{n}\right]$ 

\begin{equation}
\exp S_{n}\varphi\left(\tau\right)\asymp\diam\left(\pi^{-1}[\omega|_{n}]\right).\label{eq:boundeddistortion}\end{equation}

\subsection{Topological pressure }

The topological pressure $\mathfrak{P}\left(t\varphi+\beta\psi\right)$
of the potential $t\varphi+\beta\psi$ for $t,\beta\in\R$ is defined
as \[
\mathfrak{P}\left(t\varphi+\beta\psi\right):=\lim_{n\rightarrow\infty}\frac{1}{n}\lg\sum_{\omega\in\mathbb{N}^{n}}\exp\sup_{\tau\in[\omega]}\left(S_{n}t\varphi+\beta\psi\right)(\tau).\]
By a standard argument involving sub-additivity the above limit always
exists.

The next lemma shows, that the set $\mathcal{F}_{\alpha}$ can be
characterized by the potentials $\varphi$ and $\psi$ and that the
arithmetic-geometric pressure $P$$\left(t,\beta\right)$ agrees with
$\mathfrak{P}\left(t\varphi+\beta\psi\right)$, $t,\beta\in\R$.
\begin{lem}
\label{lem:pressure is pressure} For $\alpha\in\R$ and $x\in\mathbb{I}$
we have \[
\lim_{n\rightarrow\infty}\frac{S_{n}\psi\left(\pi\left(x\right)\right)}{S_{n}\varphi\left(\pi\left(x\right)\right)}=\alpha\iff\lim_{n\rightarrow\infty}\frac{\log\prod_{j=1}^{n}a_{j}\left(x\right)}{\log q_{n}\left(x\right)}=\alpha\]
 and \[
P\left(t,\beta\right)=\mathfrak{P}\left(t\varphi+\beta\psi\right),\qquad t,\beta\in\R.\]
\end{lem}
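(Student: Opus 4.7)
The plan is to reduce both statements to the two uniform estimates
\[
S_n\psi(\pi(x)) = -2\log\prod_{j=1}^n a_j(x), \qquad S_n\varphi(\pi(x)) = -2\log q_n(x) + O(1),
\]
in which the $O(1)$ in the second identity is uniform in $x\in\mathbb{I}$ and $n\in\N$. The first identity is immediate from the definition of $\psi$, since $\psi$ depends only on the first coordinate and $(\sigma^k\pi(x))_1 = a_{k+1}(x)$. The second combines the bounded-distortion identity~(\ref{eq:boundeddistortion}), which gives $\exp S_n\varphi(\pi(x)) \asymp \diam(\pi^{-1}[\pi(x)|_n])$, with the diameter estimate~(\ref{eq:diam comparable 1 qn2}), $\diam(\pi^{-1}[\pi(x)|_n]) \asymp q_n(x)^{-2}$.

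For the equivalence of the two limits, I would use the Fibonacci bound $q_n(x) \geq f_n \gg \gamma^n$ to conclude $\log q_n(x) \to \infty$. Writing $A_n := \log\prod_{j=1}^n a_j(x)$ and $Q_n := \log q_n(x)$, the estimates above give
\[
\frac{S_n\psi(\pi(x))}{S_n\varphi(\pi(x))} = \frac{-2A_n}{-2Q_n + O(1)} = \frac{A_n}{Q_n}\cdot\bigl(1+o(1)\bigr),
\]
and since $A_n/Q_n \in [0,1]$ by~(\ref{eq:qn versus prod ai einfach}), the difference of the two ratios is $o(1)$. Hence existence of either limit implies existence of the other with the same value, proving the first assertion of the lemma.

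For the pressure identity $P(t,\beta) = \mathfrak{P}(t\varphi+\beta\psi)$, I would evaluate the suprema in the definition of $\mathfrak{P}$ cylinder by cylinder. Since $\psi$ is constant on $1$-cylinders, $\sup_{\tau\in[\omega]} S_n\psi(\tau) = -2\log\prod_{i=1}^n\omega_i$ exactly. The bounded-distortion argument used above, applied uniformly to every $\tau\in[\omega]$, gives $\sup_{\tau\in[\omega]} S_n\varphi(\tau) = -2\log q_n(\omega) + O(1)$ uniformly in $\omega\in\N^n$. Exponentiating and summing over $\omega$ yields
\[
\sum_{\omega\in\N^n}\exp\sup_{\tau\in[\omega]}\bigl(tS_n\varphi+\beta S_n\psi\bigr)(\tau) \asymp \sum_{\omega\in\N^n} q_n(\omega)^{-2t}\prod_{i=1}^n \omega_i^{-2\beta},
\]
with multiplicative constants depending on $t$ but not on $n$ or $\omega$. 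Applying $\tfrac{1}{n}\log$ collapses these constants in the limit, giving $\mathfrak{P}(t\varphi+\beta\psi) = P(t,\beta)$. Existence of the limit defining $P$ as an element of $\R\cup\{+\infty\}$ then follows from the standard sub-additivity argument for $\mathfrak{P}$, transported through the $\asymp$ above; one can also argue directly from the concatenation estimate $q_{n+m}(\omega\tau) \asymp q_n(\omega) q_m(\tau)$, which is again a consequence of bounded distortion.

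The only real obstacle is bookkeeping: one has to confirm that all the multiplicative constants produced by~(\ref{eq:boundedDistortion1}) and~(\ref{eq:boundeddistortion}) are genuinely uniform in $\omega\in\N^n$ and in $n$, so that after dividing by $n$ they wash out in the limit. This uniformity is exactly what the Hölder continuity of $\varphi$ established earlier delivers, so no further analytic work is required.
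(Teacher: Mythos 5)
Your proposal is correct and follows essentially the same route as the paper: both claims are reduced to the uniform estimate $S_n\varphi(\omega)=-2\log q_n(\omega)+O(1)$ coming from the bounded distortion property~(\ref{eq:boundeddistortion}) and the diameter estimate~(\ref{eq:diam comparable 1 qn2}), together with the exact identity $S_n\psi(\omega)=-2\log\prod_{j\le n}\omega_j$, after which the first assertion follows from $\log q_n\to\infty$ and the second from the cylinder-by-cylinder comparison of the partition function with $\sum_\omega q_n(\omega)^{-2t}\prod_i\omega_i^{-2\beta}$, the constants washing out under $\tfrac1n\log$. Your explicit use of $A_n/Q_n\in[0,1]$ to pass between the two ratios is, if anything, slightly more careful than the paper's one-line "divide by $S_n\psi$".
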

\begin{proof}
By (\ref{eq:diam comparable 1 qn2}) and (\ref{eq:boundeddistortion})
there exist constants $C_{1},C_{2}>0$, such that for all $\omega\in\N^{\N}$
and $n\in\N$ we have\begin{equation}
-2\lg q_{n}\left(\omega\right)+\lg C_{1}\le S_{n}\varphi\left(\omega\right)\le-2\lg q_{n}\left(\omega\right)+\lg C_{2}.\label{eq:4}\end{equation}
 Dividing this inequality by $S_{n}\psi\left(\omega\right)=-2\lg\left(\prod_{j=1}^{n}\omega_{j}\right)$
and using the fact that $q_{n}\left(\omega\right)$ tends to infinity
for $n\rightarrow\infty$ proves the first assertion.

To prove the second claim notice that by (\ref{eq:diam comparable 1 qn2})
and the definition of $\psi$ we have \begin{eqnarray*}
\sum_{\omega\in\N^{n}}\exp\sup_{\tau\in[\omega]}\left(S_{n}t\varphi+\beta\psi\right)(\tau) & \asymp & \sum_{\omega\in\N^{n}}q_{n}^{-2t}\left(\omega\right)\left(\prod_{j=1}^{n}\omega_{j}\right)^{-2\beta}.\end{eqnarray*}
 Taking logarithms and dividing by $n$ again proves the claim. \end{proof}
\begin{lem}
\label{lem:pressureFiniteIFF} We have \begin{equation}
P\left(t,\beta\right)<\infty\iff2\left(t+\beta\right)>1.\label{eq:pressure finite if and onl if}\end{equation}
\end{lem}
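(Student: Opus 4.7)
The plan is to reduce the finiteness question for $P(t,\beta)$ to convergence of the Riemann zeta series $\zeta(2(t+\beta)) = \sum_{k \in \N} k^{-2(t+\beta)}$ by means of the elementary two-sided comparison in~(\ref{eq:qn versus prod ai einfach}). The entire proof will be a direct computation: no thermodynamic machinery beyond the definition of $P$ is needed.

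First I would use~(\ref{eq:qn versus prod ai einfach}), namely $\prod_{i=1}^n \omega_i \le q_n(\omega) \le 2^n \prod_{i=1}^n \omega_i$, to compare $q_n(\omega)^{-2t}$ with $\prod_{i=1}^n \omega_i^{-2t}$. Raising to the power $-2t$, and splitting into the cases $t \ge 0$ and $t < 0$ to keep track of the direction of the inequalities, one obtains constants $c_1(t),c_2(t)>0$ (independent of $n$ and $\omega$) such that
\[
c_1(t)^n \prod_{i=1}^n \omega_i^{-2t} \le q_n(\omega)^{-2t} \le c_2(t)^n \prod_{i=1}^n \omega_i^{-2t}, \qquad \omega \in \N^\N,\ n \in \N.
\]

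Multiplying by $\prod_i \omega_i^{-2\beta}$ and summing over $\omega \in \N^n$ then gives
\[
c_1(t)^n \sum_{\omega \in \N^n} \prod_{i=1}^n \omega_i^{-2(t+\beta)} \le \sum_{\omega \in \N^n} q_n(\omega)^{-2t} \prod_{i=1}^n \omega_i^{-2\beta} \le c_2(t)^n \sum_{\omega \in \N^n} \prod_{i=1}^n \omega_i^{-2(t+\beta)}.
\]
The key point is that the outer sums factorise, $\sum_{\omega \in \N^n}\prod_{i=1}^n \omega_i^{-2(t+\beta)} = \zeta(2(t+\beta))^n$, and this quantity is finite for one (equivalently, every) $n\ge 1$ precisely when $2(t+\beta) > 1$.

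Taking $\tfrac{1}{n}\log$ and letting $n \to \infty$ in the above sandwich yields
\[
\log c_1(t) + \log \zeta(2(t+\beta)) \le P(t,\beta) \le \log c_2(t) + \log \zeta(2(t+\beta)),
\]
with the usual convention $\log(+\infty)=+\infty$. Both implications of~(\ref{eq:pressure finite if and onl if}) follow immediately: $P(t,\beta)<\infty$ iff $\zeta(2(t+\beta))<\infty$ iff $2(t+\beta)>1$. There is no substantive obstacle; the only point requiring care is the sign of $t$ when inverting the comparison $\prod_i \omega_i \le q_n(\omega)\le 2^n \prod_i \omega_i$, and this is handled by absorbing the factor $2^{\pm 2tn}$ into the exponential constants $c_1(t)^n,c_2(t)^n$.
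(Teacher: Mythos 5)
Your proposal is correct and follows essentially the same route as the paper: both sandwich the partition function between $c_1(t)^n\zeta(2(t+\beta))^n$ and $c_2(t)^n\zeta(2(t+\beta))^n$ using the elementary bound $\prod_i\omega_i\le q_n(\omega)\le 2^n\prod_i\omega_i$, with the case split on the sign of $t$, and then take $\tfrac1n\log$. The paper's proof is the same computation, written with the factors $2^{-2nt}$ explicit rather than absorbed into constants.
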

\begin{proof}
Using (\ref{eq:qn versus prod ai einfach}) we have on the one hand
for $t\le0$ \[
\zeta\left(2\left(t+\beta\right)\right)^{n}\ll\sum_{\omega\in\N^{n}}q_{n}^{-2t}\left(\omega\right)\left(\prod_{j=1}^{n}\omega_{j}\right)^{-2\beta}\ll2^{-2nt}\zeta\left(2\left(t+\beta\right)\right)^{n},\]
 where $\zeta$ denotes the Riemann zeta function, which is singular
in $1$. On the other hand for $t>0$ we have \[
2^{-2nt}\zeta\left(2\left(t+\beta\right)\right)^{n}\ll\sum_{\omega\in\N^{n}}q_{n}^{-2t}\left(\omega\right)\left(\prod_{j=1}^{n}\omega_{j}\right)^{-2\beta}\ll\zeta\left(2\left(t+\beta\right)\right)^{n}.\]
Taking logarithms and dividing by $n$ then gives in both cases the
asserted equivalence. 
\end{proof}
For later use we will need a refined lower estimate for $q_{n}$,
which also relies on the recursion formula (\ref{eq:pnqn recursion})
for $q_{n}$.
\begin{lem}
\label{lem:qn vs prod ai better lower bound}For $\omega=\left(\omega_{1},\omega_{2},\dots\right)\in\N^{\N}$
and $n\in\N$ we have \[
\omega_{1}\prod_{i=2}^{n}\omega_{i}\left(1+\frac{1}{\omega_{i}\left(\omega_{i-1}+1\right)}\right)\le q_{n}\left(\omega\right)\]
\end{lem}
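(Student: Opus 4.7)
The plan is to prove the bound by telescoping the ratios $q_i/q_{i-1}$, obtaining the required factor $\omega_i\bigl(1+1/(\omega_i(\omega_{i-1}+1))\bigr)$ at each step. The base is simply $q_1(\omega)=\omega_1$, and the product on the right-hand side starts at $i=2$, so the claim for $n=1$ is an equality.

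First I would establish the auxiliary monotonicity $q_{n-2}\le q_{n-1}$ for all $n\ge 1$ (taking $q_{-1}=0$, $q_0=1$). This is immediate by induction from (\ref{eq:pnqn recursion}), since $q_n=\omega_n q_{n-1}+q_{n-2}\ge q_{n-1}$. Consequently
\[
q_{n-1}=\omega_{n-1}q_{n-2}+q_{n-3}\le(\omega_{n-1}+1)q_{n-2},
\]
which rearranges to the key estimate
\[
\frac{q_{n-2}(\omega)}{q_{n-1}(\omega)}\ge\frac{1}{\omega_{n-1}+1}\qquad(n\ge 2).
\]

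Now dividing the recursion $q_n=\omega_n q_{n-1}+q_{n-2}$ by $q_{n-1}$ and inserting this lower bound gives, for $n\ge 2$,
\[
\frac{q_n(\omega)}{q_{n-1}(\omega)}=\omega_n+\frac{q_{n-2}(\omega)}{q_{n-1}(\omega)}\ge\omega_n+\frac{1}{\omega_{n-1}+1}=\omega_n\left(1+\frac{1}{\omega_n(\omega_{n-1}+1)}\right).
\]
Telescoping,
\[
q_n(\omega)=q_1(\omega)\prod_{i=2}^{n}\frac{q_i(\omega)}{q_{i-1}(\omega)}\ge\omega_1\prod_{i=2}^{n}\omega_i\left(1+\frac{1}{\omega_i(\omega_{i-1}+1)}\right),
\]
which is exactly the asserted inequality.

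There is no real obstacle here: the entire argument rests on two uses of the recursion (\ref{eq:pnqn recursion}), first to guarantee monotonicity of $(q_n)$ and then to lower-bound the consecutive ratio. The only mild care needed is at $n=2$, where one must use the convention $q_{-1}=0$ so that $q_0/q_1=1/\omega_1\ge 1/(\omega_1+1)$, which is exactly the specialisation of the key estimate.
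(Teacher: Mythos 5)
Your proof is correct and is essentially the paper's argument: the key estimate $q_{n-1}\le(\omega_{n-1}+1)q_{n-2}$ and the resulting bound $q_n/q_{n-1}\ge\omega_n\left(1+\frac{1}{\omega_n(\omega_{n-1}+1)}\right)$ are exactly the two steps of the paper's inductive proof, with your telescoping product playing the role of the induction. The only cosmetic difference is that you spell out the monotonicity $q_{n-3}\le q_{n-2}$, which the paper uses implicitly.
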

\begin{proof}
The proof is by means of induction. For $n=1$ we have $q_{1}\left(\omega\right)=\omega_{1}$.
For $n>1$ we have by the recursion formula (\ref{eq:pnqn recursion})
that \begin{equation}
q_{n}\left(\omega\right)=\omega_{n}q_{n-1}\left(\omega\right)\left(1+\frac{q_{n-2}\left(\omega\right)}{\omega_{n}q_{n-1}\left(\omega\right)}\right)\label{eq:1}\end{equation}
 and also \begin{equation}
\frac{q_{n-1}\left(\omega\right)}{q_{n-2}\left(\omega\right)}=\omega_{n-1}+\frac{q_{n-3}\left(\omega\right)}{q_{n-2}\left(\omega\right)}\le\omega_{n-1}+1.\label{eq:2}\end{equation}
Combining (\ref{eq:2}) and (\ref{eq:1}) gives \[
\omega_{n}\left(1+\frac{1}{\omega_{n}\left(\omega_{n-1}+1\right)}\right)q_{n-1}\left(\omega\right)\le q_{n}\left(\omega\right),\]
 which proves the inductive step. 
\end{proof}
The next proposition gives bounds for the pressure $P\left(t,\beta\right)$,
which will be essential for the discussion of the boundary points
of the multifractal spectrum.
\begin{prop}
\label{pro:p estimates}We have for $t\ge0$

\[
\lg\negmedspace\left(\sum_{k\in\N}\negmedspace(k+1)^{-2t}k^{-2\beta}\right)\le P\left(t,\beta\right)\le\frac{1}{2}\lg\negmedspace\left(\sum_{\left(k,l\right)\in\N^{2}}\negmedspace\negmedspace\left(kl\right)^{-2\left(t+\beta\right)}\left(1+\frac{1}{k\left(l+1\right)}\right)^{-2t}\right).\]
\end{prop}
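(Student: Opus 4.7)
The plan is to derive the two bounds separately, each from a pointwise comparison between $q_n(\omega)$ and a product of the digits, after which the resulting partition sum factorises.

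For the \emph{lower bound}, I would first prove by a short induction on $n$, using the recursion $q_n = \omega_n q_{n-1} + q_{n-2}$ together with the monotonicity $q_{n-2}\le q_{n-1}$, that $q_n(\omega)\le\prod_{i=1}^n(\omega_i+1)$. For $t\ge0$ this at once gives $q_n(\omega)^{-2t}\ge\prod_{i=1}^n(\omega_i+1)^{-2t}$, hence
\[
\sum_{\omega\in\N^n} q_n(\omega)^{-2t}\prod_{i=1}^n\omega_i^{-2\beta}\;\ge\;\Bigl(\sum_{k\in\N}(k+1)^{-2t}k^{-2\beta}\Bigr)^{n}.
\]
Taking $\tfrac{1}{n}\lg$ and letting $n\to\infty$ yields the lower bound (by Lemma \ref{lem:pressure is pressure}, $P$ is the limit of $\tfrac1n$ times the log of the partition sum).

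For the \emph{upper bound}, I apply Lemma \ref{lem:qn vs prod ai better lower bound} to $q_{2m}$, which for $t\ge0$ gives the pointwise estimate
\[
q_{2m}(\omega)^{-2t}\prod_{i=1}^{2m}\omega_i^{-2\beta}\;\le\;\omega_1^{-2(t+\beta)}\prod_{i=2}^{2m}\omega_i^{-2(t+\beta)}\Bigl(1+\tfrac{1}{\omega_i(\omega_{i-1}+1)}\Bigr)^{-2t}.
\]
Now I would group the indices into consecutive pairs $(2j-1,2j)$, $j=1,\dots,m$. The coupling factor at an even index $i=2j$ involves only the variables $(\omega_{2j-1},\omega_{2j})$ and thus lives inside the $j$-th pair, while the coupling factor at an odd index $i=2j+1$ is a cross-pair factor of the form $(1+x)^{-2t}$ with $x>0$, $t\ge0$, and so is at most $1$ and may simply be discarded. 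After doing so, the summand factorises as $\prod_{j=1}^{m}g(\omega_{2j-1},\omega_{2j})$ with $g(k,l):=(kl)^{-2(t+\beta)}(1+1/(l(k+1)))^{-2t}$, and summing over $\omega\in\N^{2m}$ produces an $m$-fold product. Using the symmetry $(k,l)\leftrightarrow(l,k)$ of the resulting double sum to match the form in the statement, and then taking $\tfrac{1}{2m}\lg$ and $m\to\infty$ along this subsequence (which yields $P(t,\beta)$ since the defining limit exists), gives the upper bound.

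The only substantive step in either direction is the pair decoupling in the upper bound, and that in turn rests on a single cheap observation: for $t\ge0$ every cross-pair factor is bounded above by $1$ and may be dropped. All other manipulations are bookkeeping, and convergence of the sums is automatic from Lemma \ref{lem:pressureFiniteIFF} whenever $2(t+\beta)>1$.
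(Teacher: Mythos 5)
Your proposal is correct and follows essentially the same route as the paper: the lower bound via $q_n(\omega)\le\prod_{i=1}^n(\omega_i+1)$ and factorisation of the partition sum, and the upper bound via Lemma \ref{lem:qn vs prod ai better lower bound} at even lengths, discarding the odd-index (cross-pair) factors, which are at most $1$ for $t\ge0$, so that the sum decouples into pairs. The only cosmetic difference is your explicit relabelling $(k,l)\leftrightarrow(l,k)$ to match the stated form, which the paper does implicitly.
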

\begin{proof}
Using the fact that $q_{n}\left(\omega\right)\leq\prod_{k=1}^{n}\left(\omega_{k}+1\right)$
we obtain a a lower bound \begin{eqnarray*}
\sum_{\omega\in\N^{n}}q_{n}\left(\omega\right)^{-2t}\prod_{i=1}^{n}a_{i}^{-2\beta} & \ge & \sum_{\omega\in\N^{n}}\prod_{i=1}^{n}\left(\omega_{i}+1\right)^{-2t}\omega_{i}^{-2\beta}\\
 & = & \left(\sum_{k\in\N}\left(k+1\right)^{-2t}k^{-2\beta}\right)^{n},\end{eqnarray*}
 by rearranging the series. Taking logarithm and dividing by $n$
shows \[
P\left(t,\beta\right)\ge\lg\left(\sum_{k\in\N}(k+1)^{-2t}k^{-2\beta}\right).\]
 For the upper bound we use Lemma \ref{lem:qn vs prod ai better lower bound}
to conclude \begin{eqnarray*}
\sum_{\omega\in\N^{k}}q_{k}\left(\omega\right)^{-2t}\prod_{i=1}^{k}\omega_{i}^{-2\beta} & \le & \sum_{\omega\in\N^{k}}\left(\omega_{1}\prod_{i=2}^{k}\omega_{i}\left(1+\frac{1}{\omega_{i}\left(\omega_{i-1}+1\right)}\right)\right)^{-2t}\prod_{i=1}^{k}\omega_{i}^{-2\beta}\\
 & = & \sum_{\omega\in\N^{k}}\prod_{i=2}^{k}\left(1+\frac{1}{\omega_{i}\left(\omega_{i-1}+1\right)}\right)^{-2t}\prod_{i=1}^{k}\omega_{i}^{-2\left(t+\beta\right)}.\end{eqnarray*}
 Now, we only consider even $k=2n$. Since $\left(1+\left(\omega_{i}\left(\omega_{i-1}+1\right)\right)^{-1}\right)^{-2t}<1$
for all $i\geq2$, we find an upper bound by omitting all terms with
odd indices $i$ in the product $\prod_{i=2}^{2n}\left(1+\left(\omega_{i}\left(\omega_{i-1}+1\right)\right)^{-1}\right)^{-2t}$.
Using this and rearranging the series we get \begin{eqnarray*}
\sum_{\omega\in\N^{k}}q_{k}\left(\omega\right)^{-2t}\prod_{i=1}^{k}\omega_{i}^{-2\beta} & \le & \sum_{\omega\in\N^{k}}\prod_{i=1}^{n}\left(1+\frac{1}{\omega_{2i}\left(\omega_{2i-1}+1\right)}\right)^{-2t}\left(\omega_{2i-1}\omega_{2i}\right)^{-2\left(t+\beta\right)}.\end{eqnarray*}
 \begin{eqnarray*}
 & = & \sumsum_{\left(\omega_{1},\omega_{2}\right),\left(\omega_{3},\omega_{4}\right),\ldots,\left(\omega_{2n-1},\omega_{2n}\right)\in\N^{2}}\;\prod_{i=1}^{n}\left(1+\frac{1}{\omega_{2i}\left(\omega_{2i-1}+1\right)}\right)^{-2t}\;\left(\omega_{2i-1}\omega_{2i}\right)^{-2\left(t+\beta\right)}\;.\\
 & = & \left(\sum_{\left(k,l\right)\in\N^{2}}\left(kl\right)^{-2\left(t+\beta\right)}\left(1+\frac{1}{k\left(l+1\right)}\right)^{-2t}\right)^{\frac{n}{2}}.\end{eqnarray*}
 Taking logarithm and dividing by $n$ gives \[
P\left(t,\beta\right)\le\frac{1}{2}\lg\left(\sum_{\left(k,l\right)\in\N^{2}}\left(kl\right)^{-2\left(t+\beta\right)}\left(1+\frac{1}{k\left(l+1\right)}\right)^{-2t}\right).\]
\end{proof}
\begin{rem}
A straight forward calculation shows that for $t=0$ and $\beta>1/2$
we have \[
P\left(0,\beta\right)=\lim_{n\rightarrow\infty}\frac{1}{n}\lg\sum_{\omega\in\N^{n}}\prod_{i=1}^{n}a_{i}\left(\omega\right)^{-2\beta}=\lim_{n\rightarrow\infty}\frac{1}{n}\lg\left(\prod_{k=1}^{n}k^{-2\beta}\right)^{n}=\lg\left(\zeta\left(2\beta\right)\right).\]
 This value coincides for $t=0$ with the upper bound in Proposition
\ref{pro:p estimates} since \[
\frac{1}{2}\lg\left(\sum_{\left(k,l\right)\in\N^{2}}\left(kl\right)^{-2\beta}\right)=\frac{1}{2}\lg\left(\sum_{k\in\N}k^{-2\beta}\sum_{l\in\N}l^{-2\beta}\right)=\lg\left(\zeta\left(2\beta\right)\right).\]

\end{rem}

\subsection{\label{sub:Proofs-of-facts}Proof of facts}

With the results obtain in the previous subsections we are in the
position to give the proofs of the Facts \ref{fact1} to \ref{fact7}
stated in the introduction.

\emph{Proof of Facts \ref{fact1} and \ref{fact2}.} These facts are
immediate consequences of the first inequality in (\ref{eq:qn versus prod ai einfach}).
\qed

\emph{Proof of Fact \ref{fact3}.} First notice that $\pi^{-1}\left(k,k,\dots\right)$,
$k\in\N$, is a fixed point of the Gauss map $T$ and hence invariant
under $x\mapsto1/x-k$. This implies $\pi^{-1}\left(k,k,\dots\right)=-k/2+\sqrt{k^{2}/4+1}$.
Using (\ref{eq:4}) in the proof of Lemma \ref{lem:pressure is pressure}
gives for $q_{n}:=q_{n}\left(\left(k,k,\ldots\right)\right)$ \[
\lim_{n\to\infty}\frac{\log q_{n}}{n}=-\frac{1}{2}\varphi\left(\left(k,k,\ldots\right)\right)=-\lg\left(\pi^{-1}\left(k,k,\ldots\right)\right)=-\lg\left(-k/2+\sqrt{k^{2}/4+1}\right).\]
 From this the claims follow. \qed

\emph{Proof of Fact \ref{fact4}.} Using (\ref{eq:qn versus prod ai einfach})
we have\begin{eqnarray*}
\frac{\sum_{i=1}^{n}\lg a_{i}\left(x\right)}{\lg q_{n}\left(x\right)} & \ge & \frac{\sum_{i=1}^{n}\lg a_{i}\left(x\right)}{n\lg2+\sum_{i=1}^{n}\lg a_{i}\left(x\right)}\\
 & = & \left(\frac{\lg2}{n^{-1}\sum_{i=1}^{n}\lg a_{i}\left(x\right)}+1\right)^{-1}\to1,\qquad n\to\infty.\end{eqnarray*}
 Here we have used that the Cesàro mean of $\lg a_{i}\left(x\right)$
tends to infinity. \qed

\emph{Proof of Fact} \emph{\ref{fact5}.} Let us consider the ergodic
dynamical system $\left(\mathbb{I},T,\lambda_{g}\right)$ where $d\lambda_{g}\left(x\right):=\left(\log\left(2\right)\left(1+x\right)\right)^{-1}d\lambda\left(x\right)$
denotes the famous Gauss measure. By the Ergodic Theorem we have $\lambda_{g}$-a.e.
and consequently $\lambda$-a.e. \[
\lim_{n}\frac{S_{n}\psi}{n}:=\int\psi\, d\lambda_{g}=\frac{-2}{\log2}\sum_{k=1}^{\infty}\log\left(k\right)\log\left(1+\frac{1}{k\left(k+2\right)}\right)=-2\log\left(K_{0}\right)\]
as well as\[
\lim_{n}\frac{S_{n}\phi}{n}:=\int\phi\, d\lambda_{g}=\frac{2}{\log2}\int\frac{\log\left(x\right)}{\left(1+x\right)}\, d\lambda=\frac{-\zeta\left(2\right)}{\log2}=\frac{-\pi^{2}}{6\log2}.\]
From this the fact follows immediately. \qed

\emph{Proof of Fact \ref{fact6}.} Using the inequality $q_{n}\left(x\right)\le\prod_{i=1}^{n}\left(a_{i}\left(x\right)+1/q\right)$
for $x\in\mathcal{I}_{q}$ and $n\in\N$ we have\\
\\
${\displaystyle \frac{\sum_{i=1}^{n}\lg a_{i}\left(x\right)}{\lg q_{n}\left(x\right)}}$\begin{eqnarray*}
 & \ge & \frac{\sum_{i=1}^{n}\lg a_{i}\left(x\right)}{\sum_{i=1}^{n}\lg\left(a_{i}\left(x\right)+1/q\right)}=\frac{\sum_{i=1}^{n}\lg a_{i}\left(x\right)}{\sum_{i=1}^{n}\lg a_{i}\left(x\right)+\sum_{i=1}^{n}\lg\left(1+1/qa_{i}\left(x\right)\right)}\\
 & = & \frac{\sum_{i=1}^{n}\lg a_{i}\left(x\right)}{\sum_{i=1}^{n}\lg a_{i}\left(x\right)+\sum_{i=1}^{n}\lg\left(1+1/qa_{i}\left(x\right)\right)}\geq\frac{\sum_{i=1}^{n}\lg a_{i}\left(x\right)}{\sum_{i=1}^{n}\lg a_{i}\left(x\right)+n/q^{2}}\\
 & = & \left(1+\frac{n}{q^{2}\sum_{i=1}^{n}\lg a_{i}\left(x\right)}\right)^{-1}\geq\left(1+\left(q^{2}\log\left(q\right)\right)^{-1}\right)^{-1}\\
 & = & 1-\frac{\left(q^{2}\log\left(q\right)\right)^{-1}}{1+\left(q^{2}\log\left(q\right)\right)^{-1}}\geq1-\left(q^{2}\log\left(q\right)\right)^{-1}.\hspace{3cm}\qed\end{eqnarray*}

\emph{Proof of Fact \ref{fact7}.} We show that $\mathcal{B}\subset\mathbb{I}\setminus\mathcal{F}_{1}$.
Let us assume that for an element $x\in\mathcal{B}$ the sequence
$\left(a_{i}\left(x\right)\right)_{i\in\N}$ is bounded by $M\ge2$.
Then using the lower bound for $q_{n}$ provided in Lemma \ref{lem:qn vs prod ai better lower bound}
and the fact that $\log\left(1+t\right)\ge t\cdot\log(2),$ $t\in\left[0,1\right]$
we calculate for $n\geq2$\begin{eqnarray*}
\frac{\sum_{i=1}^{n}\lg a_{i}\left(x\right)}{\lg q_{n}\left(x\right)} & \le & \frac{\sum_{i=1}^{n}\lg a_{i}\left(x\right)}{\sum_{i=1}^{n}\lg a_{i}\left(x\right)+\sum_{i=2}^{n}\lg\left(1+\left(a_{i}\left(x\right)\left(a_{i-1}\left(x\right)+1\right)\right)^{-1}\right)}\\
 & \le & \left(1+\frac{\sum_{i=2}^{n}\log\left(2\right)/a_{i}\left(x\right)\left(a_{i-1}\left(x\right)+1\right)}{\sum_{i=1}^{n}\lg a_{i}\left(x\right)}\right)^{-1}\\
 & \leq & \left(1+\frac{\log2\left(n-1\right)}{M\left(M+1\right)n\lg M}\right)^{-1}\\
 & \leq & \left(1+\frac{\log2}{2M\left(M+1\right)\lg M}\right)^{-1}<1.\end{eqnarray*}
Since the left hand side is bounded away from $1$ by a constant only
depending on $M$ the fact follows. \qed

\subsection{Gibbs states}

Let us recall some basic facts about Gibbs states taken from \cite{urbanskimauldin-gdmsMR2003772}.
For a continuous function $f\colon\N^{\N}\to\R$ a Borel probability
measure $m$ on $\N^{\N}$ is called a \emph{Gibbs state for $f$,}
if there exists a constant $Q\ge1$ such that for every \emph{$n\in\N$,}
$\omega\in\N^{n}$ and \emph{$\tau\in[\omega]$} we have\begin{equation}
Q^{-1}\le\frac{m([\omega])}{\exp(S_{n}f(\tau)-n\mathfrak{P}\left(f\right))}\le Q.\label{eq:gibbsequ}\end{equation}
 If in addition the measure $m$ is $\sigma$-invariant then $m$
is called an \emph{invariant Gibbs state} for $f$.

Also the concept of the metric entropy will by crucial. Recall that
in our situation for a $\sigma$-invariant measure $\mu$ the \emph{metric
entropy} is given by \[
h_{\mu}:=\lim_{n}\frac{1}{n}\sum_{\omega\in\N^{n}}\mu\left(\left[\omega\right]\right)\log\left(\mu\left(\left[\omega\right]\right)\right),\]
where as usual we set $0\cdot\log0=0$. Note, that the above limit
always exists (see e.g. \cite{walters-ergodictheoryMR648108}). 

The next proposition states the key result of the Thermodynamic Formalism
in our context, that is the existence and uniqueness of equilibrium
measures for the Hölder continuous and summable potential $t\varphi+\beta\psi$
with $2\left(t+\beta\right)>1$, i.e. $\sum_{k\in\N}\exp\sup_{\tau\in\left[k\right]}\left(t\varphi+\beta\psi\right)\left(\tau\right)<\infty$.
For a proof we refer to \cite{urbanskimauldin-gdmsMR2003772} (see
e.g. \cite{bowenequilibriumMR0442989} for a classical version valid
for compact state spaces).
\begin{prop}
\label{pro:existence of invariant gibbs}For each\, $\left(t,\beta\right)\in\R^{2}$
such that\, $2\left(t+\beta\right)>1$ there exists a unique invariant
Gibbs state $\mu_{t\varphi+\beta\psi}$ for the potential $t\varphi+\beta\psi$,
which is ergodic and an equilibrium state for the potential, i.e.
$P\left(t,\beta\right)=h_{\mu_{t\varphi+\beta\psi}}+\int t\varphi+\beta\psi\, d\mu_{t\varphi+\beta\psi}$.
\end{prop}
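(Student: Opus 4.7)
The plan is to apply the Thermodynamic Formalism for infinite conformal iterated function systems developed by Mauldin and Urba\'nski. Two ingredients are required for the abstract theorem: H\"older continuity and summability of the potential $f:=t\varphi+\beta\psi$. H\"older continuity was verified in Subsection~2.1 ($\psi$ is locally constant, and $\varphi$ is H\"older by the computation preceding (\ref{eq:boundedDistortion1})). Summability, $\sum_{k\in\N}\exp\sup_{\tau\in[k]}f(\tau)<\infty$, reformulates the condition $2(t+\beta)>1$: on the $1$-cylinder $[k]$ one has $\exp\sup f\asymp k^{-2(t+\beta)}$, and convergence of this series is exactly what is analyzed in Lemma~\ref{lem:pressureFiniteIFF}.

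The central object is the Perron-Frobenius transfer operator
\[
\mathcal{L}_f g(\omega):=\sum_{a\in\N}e^{f(a\omega)}g(a\omega),
\]
which by summability maps bounded H\"older continuous functions to bounded H\"older continuous functions. I would first produce a Borel probability eigenmeasure $\nu$ of the dual, $\mathcal{L}_f^*\nu=\lambda\nu$ for some $\lambda>0$, via a Schauder-Tychonoff fixed point argument applied to the normalized map $m\mapsto\mathcal{L}_f^*m/(\mathcal{L}_f^*m)(1)$ on Borel probabilities. Next one constructs a strictly positive H\"older eigenfunction $h$ with $\mathcal{L}_f h=\lambda h$, typically as a Ces\`aro or direct limit of $\lambda^{-n}\mathcal{L}_f^n 1$, whose convergence follows from a Doeblin-Fortet (Lasota-Yorke) inequality for the H\"older seminorm. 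Normalizing $\int h\,d\nu=1$ defines $\mu:=h\,\nu$, a $\sigma$-invariant Borel probability.

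The Gibbs property is then essentially immediate: using the single-preimage identity $\mathcal{L}_f^n\chi_{[\omega]}(\rho)=e^{S_nf(\omega\rho)}$ and the eigenmeasure equation $\nu(\mathcal{L}_f^n g)=\lambda^n\nu(g)$, combined with bounded distortion of the H\"older sums $S_n f$ on cylinders, one gets
\[
Q^{-1}\le\frac{\nu([\omega])}{\lambda^{-n}\exp S_nf(\tau)}\le Q\qquad(\omega\in\N^n,\ \tau\in[\omega]),
\]
and boundedness of $h$ away from $0$ and $\infty$ transfers the estimate to $\mu$. Summing over $\omega\in\N^n$, taking logarithms, and dividing by $n$ identifies $\log\lambda=\mathfrak{P}(f)=P(t,\beta)$ by Lemma~\ref{lem:pressure is pressure}. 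Ergodicity follows by a standard Gibbs-measure argument: an invariant set of positive $\mu$-measure can be approximated arbitrarily well by finite unions of cylinders, and the two-sided Gibbs bound together with topological exactness of $\sigma$ on $\N^\N$ forces its measure to be $1$; uniqueness of the invariant Gibbs state is a corollary.

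The equilibrium identity $P(t,\beta)=h_\mu+\int f\,d\mu$ finally splits into two inequalities. The direction $h_\mu+\int f\,d\mu\ge P(t,\beta)$ is routine from the Gibbs bound inserted into the partition-based definition of entropy, with the constant $\log Q$ washing out in the limit. The opposite inequality, the full variational principle over all $\sigma$-invariant Borel probabilities with $\int f\,d\eta>-\infty$, is the main obstacle, since the non-compactness of $\N^\N$ obstructs the classical Bowen approach. The standard remedy, carried out in detail in \cite{urbanskimauldin-gdmsMR2003772}, is to exhaust $\N^\N$ by full shifts over the finite sub-alphabets $\{1,\ldots,N\}$ (where Bowen's theory applies directly) and to pass to the limit $N\to\infty$, using summability of $f$ to dominate the tails and a standard semicontinuity argument for the entropy functional; uniqueness of the maximizer then follows from uniqueness of the invariant Gibbs state established above.
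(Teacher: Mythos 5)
Your proposal is correct and follows essentially the same route as the paper: the paper proves this proposition by verifying exactly the two hypotheses you identify (H\"older continuity of $t\varphi+\beta\psi$ and summability $\sum_{k\in\N}\exp\sup_{\tau\in[k]}\left(t\varphi+\beta\psi\right)\left(\tau\right)<\infty$, equivalent to $2\left(t+\beta\right)>1$) and then invoking the Thermodynamic Formalism of Mauldin--Urba\'nski \cite{urbanskimauldin-gdmsMR2003772}. Your sketch of the Perron--Frobenius eigenmeasure/eigenfunction construction, the Gibbs estimate, ergodicity, and the finite-alphabet exhaustion for the variational principle is just an unpacking of that cited machinery, so there is no substantive difference in approach.
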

We close this subsection with a technical lemma needed for the proof
of Proposition \ref{pro:exTf(beta)}.
\begin{lem}
\label{lem:integralfinitepropertychecked} For each\/ $\left(t,\beta\right)\in\R^{2}$
such that\/ $2\left(t+\beta\right)>1$ we have $\varphi,\psi\in\mathcal{L}\left(\mu_{t\varphi+\beta\psi}\right)$. \end{lem}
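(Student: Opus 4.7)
The plan is to show that the Gibbs measure $\mu := \mu_{t\varphi+\beta\psi}$ gives the first cylinder $[k]$ a mass of order $k^{-2(t+\beta)}$, and then bound $|\varphi|$ and $|\psi|$ on $[k]$ by a constant times $\log k$, so that integrability reduces to summability of $\sum_k k^{-2(t+\beta)}\log k$, which converges because $2(t+\beta)>1$.

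First, I would compute the size of the potentials on $[k]$. By definition, $\psi$ is constant on $[k]$, equal to $-2\log k$. For $\varphi$, I would use inequality (\ref{eq:4}) applied with $n=1$ (where $q_{1}(\omega)=\omega_{1}=k$): for every $\tau\in[k]$,
\[
-2\log k+\log C_{1}\le\varphi(\tau)\le-2\log k+\log C_{2}.
\]
In particular, $|\varphi(\tau)|\le 2\log k+|\log C_{1}|+|\log C_{2}|$ on $[k]$, so both $|\varphi|$ and $|\psi|$ restricted to $[k]$ are $O(\log k)$ with constants independent of $k$.

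Next, I would apply the Gibbs inequality (\ref{eq:gibbsequ}) with $n=1$ to get
\[
\mu([k])\le Q\exp\bigl(\sup_{\tau\in[k]}(t\varphi+\beta\psi)(\tau)-P(t,\beta)\bigr).
\]
Combining the two cases $t\ge 0$ and $t<0$ with the two-sided bound on $\varphi$ just derived, and using that $\psi\equiv -2\log k$ on $[k]$, we obtain a constant $C=C(t,\beta)$ such that $\sup_{\tau\in[k]}(t\varphi+\beta\psi)(\tau)\le -2(t+\beta)\log k+C$ for all $k\in\N$. Hence $\mu([k])\ll k^{-2(t+\beta)}$.

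Finally, integrating cylinder by cylinder,
\[
\int|\varphi|\,d\mu=\sum_{k\in\N}\int_{[k]}|\varphi|\,d\mu\ll\sum_{k\in\N}(\log k)\,k^{-2(t+\beta)}<\infty,
\]
and the analogous estimate holds for $\psi$; the series converges precisely because $2(t+\beta)>1$. There is no real obstacle here: the only point that requires a moment of care is handling both signs of $t$ uniformly when bounding $\sup_{[k]}(t\varphi+\beta\psi)$, which the two-sided estimate (\ref{eq:4}) takes care of at once.
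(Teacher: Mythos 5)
Your proposal is correct and follows essentially the same route as the paper: bound $|\varphi|$ and $|\psi|$ on each first-level cylinder $[k]$ by a constant plus $2\log k$, use the Gibbs property at level $n=1$ to get $\mu([k])\ll k^{-2(t+\beta)}$, and sum the series $\sum_k (\log k)\,k^{-2(t+\beta)}$, which converges since $2(t+\beta)>1$. The only cosmetic difference is that the paper first reduces to $\varphi$ via $|\psi|\le|\varphi|$, whereas you treat both potentials directly; this changes nothing of substance.
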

\begin{proof}
Since we have $|\psi|\le|\varphi|$ it suffices to show $\varphi\in\mathcal{L}\left(\mu_{t\varphi+\beta\psi}\right)$.
We have \begin{eqnarray*}
\int|\varphi|\, d\mu_{t\varphi+\beta\psi} & \le & \sum_{i\in\N}\sup\left(|\varphi|_{|\left[i\right]}\right)\mu_{t\varphi+\beta\psi}\left(\left[i\right]\right).\end{eqnarray*}
Observing $\sup\left(|\varphi|_{|\left[i\right]}\right)\le\lg\left(i+1\right)$
and using the Gibbs property (\ref{eq:gibbsequ}) for $\mu_{t\varphi+\beta\psi}$
we have \begin{eqnarray*}
\sum_{i\in\N}\sup\left(|\varphi|_{|\left[i\right]}\right)\mu_{t\varphi+\beta\psi}\left(\left[i\right]\right) & \ll & \sum_{i\in\N}\lg\left(i+1\right)\exp\sup\left(t\varphi+\beta\psi_{|\left[i\right]}\right)\\
 & \ll & \sum_{i\in\N}\lg\left(i+1\right)\frac{1}{i^{2\left(t+\beta\right)}}<\infty.\end{eqnarray*}
 
\end{proof}

\subsection{The arithmetic-geometric free energy }

To guarantee that the free energy function is non-linear and hence
the multifractal spectrum is non-trivial we need the following observation.
\begin{lem}
\label{lem:noncohomologous}The potentials $\varphi$ and $\psi$
are linear independent in the cohomology class of bounded Hölder continuous
functions, i.e. for every bounded Hölder continuous function $u$
satisfying \textup{$\alpha\varphi+\beta\psi=u-u\circ\sigma$ we have
$\alpha=\beta=0$.}  \end{lem}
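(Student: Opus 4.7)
The plan is a standard Livsic-type argument based on the observation that if $\alpha\varphi+\beta\psi = u - u\circ\sigma$ for a bounded Hölder $u$, then the Birkhoff sum $S_n(\alpha\varphi+\beta\psi)$ telescopes to zero on every periodic orbit. Precisely, if $\omega$ is $\sigma$-periodic of period $n$, then
\[
S_n(\alpha\varphi+\beta\psi)(\omega) \;=\; \sum_{k=0}^{n-1}\bigl(u(\sigma^k\omega)-u(\sigma^{k+1}\omega)\bigr) \;=\; u(\omega)-u(\sigma^n\omega) \;=\; 0.
\]
Hence it suffices to exhibit two periodic orbits on which the linear system
\[
\alpha\, S_n\varphi(\omega) \;+\; \beta\, S_n\psi(\omega) \;=\; 0
\]
has only the trivial solution.

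I will apply this to the period-one fixed points $\omega_k := (k,k,k,\dots)$. By Fact \ref{fact3}, $\pi^{-1}(\omega_k) = x_k := -k/2+\sqrt{k^2/4+1}$. Since $T'(x)=-1/x^2$, one has $|T'(x_k)| = x_k^{-2}$, so
\[
\varphi(\omega_k) \;=\; 2\log x_k, \qquad \psi(\omega_k) \;=\; -2\log k.
\]
The telescoping identity at $\omega_k$ therefore reads
\[
\alpha\,\log x_k \;=\; \beta\,\log k \qquad (k\in\N).
\]

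Specializing to $k=1$: $\log 1 = 0$, while $x_1 = (\sqrt{5}-1)/2 = 1/\gamma$ gives $\log x_1 = -\log\gamma \neq 0$. This forces $\alpha = 0$. Specializing then to $k=2$: $\log 2 \neq 0$, so $\beta = 0$. The argument uses only that $u$ is finite at the two specified fixed points, which is ensured by boundedness. There is no real obstacle here; the only thing one has to verify is the explicit evaluation of $\varphi$ at the fixed points, which is an immediate consequence of the conjugacy and the chain rule.
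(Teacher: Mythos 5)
Your proof is correct and takes essentially the same route as the paper: both arguments exploit the coboundary relation along the constant sequences $(1,1,1,\dots)$ and $(2,2,2,\dots)$ (your $\omega_k$ with $k=1,2$), where $\psi$ respectively vanishes and is nonzero, to force $\alpha=0$ and then $\beta=0$. The only difference is cosmetic: you use exact telescoping of $S_n(u-u\circ\sigma)$ on periodic orbits (so only finiteness of $u$ at the fixed points is needed), while the paper instead bounds $\left\Vert u-u\circ\sigma^{n}\right\Vert$ by the boundedness of $u$ and contrasts this with the linear growth of the Birkhoff sums at the same two points.
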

\begin{proof}
Suppose there exists a bounded Hölder continuous function $u:\N^{\N}\to\R$,
such that \[
\alpha\varphi+\beta\psi=u-u\circ\sigma.\]
 Since $u$ is bounded, there exists $C<\infty$, such that for all
$n\in\N$ \[
\left\Vert S_{n}\left(u-u\circ\sigma\right)\right\Vert =\left\Vert u-u\circ\sigma^{n}\right\Vert <C\]
 where $\left\Vert \cdot\right\Vert $ denotes the uniform norm on
the space of bounded continuous functions. This implies for all $n\in\N$
\begin{equation}
\left\Vert \alpha S_{n}\varphi+\beta S_{n}\psi\right\Vert <C.\label{eq:cohomologyestimate}\end{equation}
 For $\omega=\left(1,1,1,\dots\right)$ we have $\beta S_{n}\psi(\omega)+\alpha S_{n}\varphi\left(\omega\right)=\alpha S_{n}\varphi\left(\omega\right)=2n\alpha\log\gamma$
for all $n\in\N$. This stays bounded only for $\alpha=0$. Furthermore,
for $\omega=\left(2,2,2,\ldots\right)$ we have $\beta S_{n}\psi=2n\beta\lg(2)$
for all $n\in\N$. Again this stays bounded only if also $\beta=0$. \end{proof}
\begin{prop}
\label{pro:exTf(beta)}For each $\beta\in\R$ there exists a unique
number $t\left(\beta\right)$ such that \begin{equation}
P\left(t\left(\beta\right),\beta\right)=0.\label{eq:pressure equation in gauss}\end{equation}
 The arithmetic-geometric free energy function $t$ defined in this
way is real-analytic and strictly convex, and we have \begin{equation}
t'\left(\beta\right)=-\frac{\int\psi\, d\mu_{\beta}}{\int\varphi\, d\mu_{\beta}}<0,\label{eq:7}\end{equation}
 where $\mu_{\beta}$ denotes the unique invariant Gibbs state for
$t\left(\beta\right)\varphi+\beta\psi$. \end{prop}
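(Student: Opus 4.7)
The plan is to split the proposition into three independent pieces --- existence and uniqueness of $t(\beta)$, real-analyticity together with the derivative formula (\ref{eq:7}), and strict convexity --- all handled inside the infinite-alphabet thermodynamic formalism that the paper has already installed.

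For \emph{existence and uniqueness}, I fix $\beta \in \R$ and study $t \mapsto P(t,\beta)$ on the half-line $\{t : 2(t+\beta) > 1\}$, exactly where $P$ is finite by Lemma \ref{lem:pressureFiniteIFF}. The boundary behaviour is read off from Proposition \ref{pro:p estimates}: the lower estimate $P(t,\beta) \geq \log \sum_{k} (k+1)^{-2t} k^{-2\beta}$ blows up to $+\infty$ as $t \searrow 1/2 - \beta$ since the series diverges, while the upper estimate drives $P(t,\beta) \to -\infty$ as $t \to \infty$. Continuity is automatic for the pressure on its finiteness domain, and strict monotonicity in $t$ will follow from the derivative computation of the next step. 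The intermediate value theorem then yields a unique $t(\beta)$.

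For \emph{analyticity and (\ref{eq:7})}, the route is the implicit function theorem applied to $P(t,\beta) = 0$. Two inputs from Mauldin--Urba\'nski theory are required: (i) $P$ is jointly real-analytic in $(t,\beta)$ on $\{2(t+\beta) > 1\}$, and (ii) its partial derivatives are given by
\[ \partial_t P(t,\beta) = \int \varphi \, d\mu_{t\varphi + \beta\psi}, \qquad \partial_\beta P(t,\beta) = \int \psi \, d\mu_{t\varphi + \beta\psi}, \]
where the integrals are finite by Lemma \ref{lem:integralfinitepropertychecked} and the invariant Gibbs state comes from Proposition \ref{pro:existence of invariant gibbs}. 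Since $\varphi < 0$ strictly, $\partial_t P < 0$ (which retroactively supplies the monotonicity used in the previous paragraph), the implicit function theorem applies, and we obtain both the real-analyticity of $\beta \mapsto t(\beta)$ and the ratio in (\ref{eq:7}); its sign is negative because $\psi$ and $\varphi$ are both strictly negative.

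For \emph{strict convexity}, I appeal to the standard thermodynamic principle that $(t,\beta) \mapsto P(t,\beta)$ is convex on its finiteness domain and is strictly convex precisely when $\varphi$ and $\psi$ are linearly independent modulo bounded H\"older coboundaries --- exactly the content of Lemma \ref{lem:noncohomologous}. Hence the Hessian of $P$ at $(t(\beta),\beta)$ is positive definite. Differentiating $P(t(\beta),\beta) \equiv 0$ twice gives
\[ \partial_t P \cdot t''(\beta) + (\partial_{tt} P)(t'(\beta))^2 + 2(\partial_{t\beta} P)\, t'(\beta) + \partial_{\beta\beta} P = 0, \]
and the last three terms form the Hessian of $P$ evaluated at the vector $(t'(\beta),1)$, so they are strictly positive; combined with $\partial_t P < 0$ this yields $t''(\beta) > 0$. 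The main technical hurdle is not in these elementary pieces but in correctly extracting the real-analyticity of $P$ and the derivative-as-integral formulas from the Mauldin--Urba\'nski monograph in the non-compact, infinite-alphabet setting, where the spectral analysis of the transfer operator requires verifying summability hypotheses throughout the open finiteness domain rather than merely on compact subsets.
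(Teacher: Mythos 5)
Your proposal follows essentially the same route as the paper's proof: Mauldin--Urba\'nski real-analyticity of $P$ on $\left\{ 2\left(t+\beta\right)>1\right\}$ together with the derivative-as-integral formulas (finiteness via Lemma \ref{lem:integralfinitepropertychecked}), the implicit function theorem for $t\left(\beta\right)$ and (\ref{eq:7}), and strict convexity from positivity of the second-order form in the direction $\left(t'\left(\beta\right),1\right)$, which the paper expresses as the asymptotic variance $\sigma_{\beta}^{2}\left(t'\left(\beta\right)\varphi+\psi\right)>0$ via Lemma \ref{lem:noncohomologous}. The only cosmetic deviations are that you obtain existence by an intermediate-value argument from the two boundary limits of $t\mapsto P\left(t,\beta\right)$ (note the lower bound in Proposition \ref{pro:p estimates} is stated for $t\ge0$, so for $\beta>1/2$ one should instead invoke the $\zeta$-estimates from the proof of Lemma \ref{lem:pressureFiniteIFF}) where the paper uses $\frac{\partial}{\partial t}P\le-2\log\gamma$, and that at the convexity step one must, exactly as the paper does, use $\int\left(t'\left(\beta\right)\varphi+\psi\right)d\mu_{\beta}=0$ to reduce ``cohomologous to a constant'' to the coboundary case actually covered by Lemma \ref{lem:noncohomologous}.
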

\begin{proof}
By \cite[Theorem 2.6.12]{urbanskimauldin-gdmsMR2003772} we know that
the pressure $P$ is real-analytic on $\left\{ \left(t,\beta\right)\in\R^{2}:P\left(t,\beta\right)<\infty\right\} $.
Hence by Lemma \ref{lem:pressureFiniteIFF}, $P$ is real-analytic
precisely on$\left\{ 2\left(t+\beta\right)>1\right\} $. By \cite[Proposition 2.6.13]{urbanskimauldin-gdmsMR2003772}
the partial derivatives can be expressed as integrals, i.e. \[
\frac{\partial}{\partial\beta}P\left(t,\beta\right)=\int\psi\, d\mu_{t\varphi+\beta\psi},\;\frac{\partial}{\partial t}P\left(t,\beta\right)=\int\varphi\, d\mu_{t\varphi+\beta\psi},\]
where Lemma \ref{lem:integralfinitepropertychecked} assures that
$\varphi,\psi\in\mathcal{L}\left(\mu_{t\varphi+\beta\psi}\right)$
for all $\left(t,\beta\right)$ with $2\left(t+\beta\right)>1$. Since
$\mu$ is ergodic we have \[
\int\varphi\, d\mu\le\sup_{x\in\N^{\N}}\limsup_{n}\frac{S_{n}\varphi\left(x\right)}{n}=\varphi\left(\left(1,1,1,\ldots\right)\right)=-2\log\gamma,\]
 where again $\gamma$ denotes the Golden Mean. Consequently, \begin{equation}
\frac{\partial}{\partial t}P\left(t,\beta\right)=\int\varphi\, d\mu_{t\varphi+\beta\psi}\le-2\log\gamma<0\label{eq:pressure bounded away from zero}\end{equation}
 is bounded away from zero.

Now let $\beta\in\R$. By Lemma \ref{lem:pressureFiniteIFF} we have
that $P\left(t,\beta\right)<\infty$, if and only if $2\left(t+\beta\right)>1$.
Also, since $\lim_{t\searrow\frac{1}{2}-\beta}P\left(t,\beta\right)=\infty$
we find $t_{0}\in\R$ such that $0<P\left(t_{0},\beta\right)<\infty$.
By (\ref{eq:pressure bounded away from zero}) we conclude that there
exists a unique $t=t\left(\beta\right)$ with $P\left(t\left(\beta\right),\beta\right)=0$.
By the implicit function theorem and (\ref{eq:pressure bounded away from zero})
we have that the function $t$ is real-analytic and\begin{equation}
t'\left(\beta\right)=-\frac{\frac{\partial}{\partial\beta}P\left(t\left(\beta\right),\beta\right)}{\frac{\partial}{\partial t}P\left(t\left(\beta\right),\beta\right)}=-\frac{\int\psi\, d\mu_{\beta}}{\int\varphi\, d\mu_{\beta}}<0.\label{eq:6}\end{equation}
 Concerning the strict convexity of $t$ we follow \cite{kessehomologyatinfinityMR2337557}.
Observe that \[
-\frac{\partial}{\partial t}P\left(t\left(\beta\right),\beta\right)t''\left(\beta\right)=\sigma_{\beta}^{2}\left(t'\left(\beta\right)\varphi+\psi\right)\ge0,\]
 where\\
 \\
 $\sigma_{\beta}^{2}\left(t'\left(\beta\right)\varphi+\psi\right)=$\[
\lim_{n\longrightarrow\infty}\frac{1}{n}\int\left(S_{n}\left(\left(t'\left(\beta\right)\varphi+\psi\right)-\int\left(t'\left(\beta\right)\varphi+\psi\right)\, d\mu_{\beta}\right)\right)^{2}\, d\mu_{\beta}\]
is the asymptotic variance of $S_{n}\left(t'\left(\beta\right)\varphi+\psi\right)$
with respect to the invariant Gibbs measure $\mu_{\beta}$. Since
$\int\left(t'\left(\beta\right)\varphi+\psi\right)\, d\mu_{\beta}=0$
by (\ref{eq:6}) we can conclude by (\cite[Lemma 4.88]{urbanskimauldin-gdmsMR2003772})
that $\sigma_{\beta}^{2}\left(t'\left(\beta\right)\varphi+\psi\right)>0$,
since $\varphi$ and $\psi$ are elements of $\mathcal{L}^{2}\left(\mu_{\beta}\right)$
by Lemma \ref{lem:integralfinitepropertychecked} and are linearly
independent in the cohomology class of bounded Hölder continuous functions
by Lemma \ref{lem:noncohomologous}. 
\end{proof}
The following lemma will be crucial for the the asymptotic properties
of $f$ in $1$ and will be used in the proofs of the main theorems
in Section \ref{sec:Multifractal-Analysis}.
\begin{lem}
\label{lem:tAsymptoticFor-infty}For all\/ $0<\epsilon<1/2$ we have
\[
t\left(\beta\left(\epsilon\right)\right)<1/2-\beta\left(\epsilon\right)+\frac{\epsilon}{2}\quad\mbox{with }\;\beta\left(\epsilon\right):=\frac{3}{\log(2)}\log\left(\epsilon\right)\left(\frac{\epsilon}{3}\right)^{-4/\epsilon}.\]
\end{lem}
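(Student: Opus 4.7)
The plan is to apply the monotonicity of $P(\cdot,\beta(\epsilon))$ together with the upper bound of Proposition~\ref{pro:p estimates}. By Proposition~\ref{pro:exTf(beta)}, $t(\beta(\epsilon))$ is the unique real number solving $P(t,\beta(\epsilon))=0$, and \eqref{eq:pressure bounded away from zero} shows $\partial_t P<0$ wherever $P$ is finite. Consequently, if I can exhibit an $s>1/2-\beta(\epsilon)$ with $P(s,\beta(\epsilon))<0$, then $t(\beta(\epsilon))<s$. The natural candidate is $s:=1/2-\beta(\epsilon)+\epsilon/2$, for which $2(s+\beta(\epsilon))=1+\epsilon$; note that $\beta(\epsilon)$ is very negative, so $s$ is very large and positive.

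By Proposition~\ref{pro:p estimates} it then suffices to prove
\[
\Sigma(\epsilon)\;:=\;\sum_{(k,l)\in\N^{2}}(kl)^{-(1+\epsilon)}\left(1+\frac{1}{k(l+1)}\right)^{-2s}\;<\;1.
\]
I would bound the weight using $\log(1+x)\ge x/(1+x)$ and $k(l+1)+1\le 2k(l+1)$, yielding $(1+1/k(l+1))^{-2s}\le\exp(-s/k(l+1))$. Then I would split the double sum at the threshold $k(l+1)\le N$ versus $k(l+1)>N$ with $N:=s/A$ and $A:=4\log(1/\epsilon)$. On the low piece the exponential factor is at most $e^{-A}=\epsilon^{4}$, so its contribution is $\ll e^{-A}\zeta(1+\epsilon)^{2}\ll\epsilon^{2}$. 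On the high piece I would drop the exponential factor and apply integral comparison first to $\sum_{k\ge N/(l+1)}k^{-(1+\epsilon)}$ and then to $\sum_{l\ge 1}l^{-(1+\epsilon)}(l+1)^{\epsilon}\ll\log(N/2)$, producing a bound of order $(A/s)^{\epsilon}$ times polylogarithmic factors in $N$ and $1/\epsilon$.

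The precise form of $\beta(\epsilon)=(3/\log(2))\log(\epsilon)(\epsilon/3)^{-4/\epsilon}$ is calibrated so that $s^{\epsilon}$ cancels the polynomial-in-$1/\epsilon$ blow-up coming from the zeta estimates. Indeed, a direct computation gives $s\asymp|\beta(\epsilon)|\asymp(3/\epsilon)^{4/\epsilon}|\log\epsilon|$, hence $s^{\epsilon}\asymp(3/\epsilon)^{4}$ (the factor $|\log\epsilon|^{\epsilon}$ being $1+o(1)$) and therefore $(A/s)^{\epsilon}\ll(\epsilon/3)^{4}$. Since $\log N\asymp(1/\epsilon)\log(1/\epsilon)$, the high piece contributes at most of order $\epsilon^{2}\log(1/\epsilon)$. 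Adding the two contributions gives $\Sigma(\epsilon)<1$, as required.

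The main obstacle will be the uniform bookkeeping of constants so that $\Sigma(\epsilon)<1$ holds on the full range $0<\epsilon<1/2$ and not merely asymptotically: the cutoff $N$ has to be chosen so that $e^{-A}\zeta(1+\epsilon)^{2}$ and $(A/s)^{\epsilon}\log(N)/\epsilon^{2}$ are simultaneously below $1/2$. The exponent $4/\epsilon$ and the base $3/\epsilon$ appearing in $\beta(\epsilon)$ are precisely the values for which this calibration survives on the entire interval $(0,1/2)$, leaving enough slack to absorb the constants introduced by $A=4\log(1/\epsilon)$ and by the two integral comparisons.
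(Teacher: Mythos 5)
Your strategy is essentially the paper's own: you reduce the lemma to showing that the double series appearing in the upper bound of Proposition~\ref{pro:p estimates}, evaluated at $s:=1/2-\beta(\epsilon)+\epsilon/2$ (so that $2(s+\beta(\epsilon))=1+\epsilon$ and $s>0$, which legitimizes the use of that bound), is strictly less than $1$, and you then conclude $t(\beta(\epsilon))<s$ from the strict monotonicity of $P(\cdot,\beta)$ in \eqref{eq:pressure bounded away from zero} (the paper phrases this as a contradiction, but the content is identical). The only real difference is the splitting: you cut along the hyperbola $k(l+1)\le N$ with the $s$-dependent threshold $N=s/(4\log(1/\epsilon))$ and exploit the weight via $\bigl(1+1/(k(l+1))\bigr)^{-2s}\le\exp\bigl(-s/(k(l+1))\bigr)\le\epsilon^{4}$, whereas the paper cuts along the box $k,l\le N(\epsilon)=(\epsilon/3)^{-2/\epsilon}$, bounds the weight at the corner by $\bigl(1+1/(2N(\epsilon)^{2})\bigr)^{2\beta(\epsilon)}$ and uses $\log(1+x)\ge x\log 2$ together with $\beta(\epsilon)/N(\epsilon)^{2}=(3/\log 2)\log\epsilon$. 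Both versions rest on the same mechanism, and your calibration does leave room: the low piece is at most $\epsilon^{4}\zeta(1+\epsilon)^{2}\le\epsilon^{2}(1+\epsilon)^{2}\le 0.57$ on $(0,1/2)$, and since $s\ge-\beta(\epsilon)$ gives $N^{-\epsilon}\le\bigl(\tfrac{4\log 2}{3}\bigr)^{\epsilon}(\epsilon/3)^{4}\le(\epsilon/3)^{4}$, the high piece is of order $\epsilon^{2}\log(1/\epsilon)$ with small constants, so the total stays uniformly below $1$.

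Two points must be repaired before this is a proof. First, the intermediate estimate $\sum_{l\ge1}l^{-(1+\epsilon)}(l+1)^{\epsilon}\ll\log(N/2)$ is false as written: the summand is at least $1/l$, so that series diverges; moreover the tail bound $\sum_{k>N/(l+1)}k^{-(1+\epsilon)}\le\epsilon^{-1}\bigl(N/(l+1)\bigr)^{-\epsilon}$ from \eqref{eq:keylemma integral criteria} is only valid when $N/(l+1)\ge1$. You need to split the $l$-range: for $l+1\le N$ use the truncated sum $\sum_{l\le N}l^{-(1+\epsilon)}(l+1)^{\epsilon}\ll\log N$, and for $l+1>N$ bound the inner $k$-sum by $\zeta(1+\epsilon)\le1+1/\epsilon$ and use the $l$-tail $\sum_{l\ge N-1}l^{-(1+\epsilon)}\le\epsilon^{-1}(N-1)^{-\epsilon}$; this extra term is $O(\epsilon^{-2}N^{-\epsilon})=O(\epsilon^{2})$ and harmless, but it has to appear. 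Second, since the lemma asserts the inequality on the whole interval $0<\epsilon<1/2$ and not asymptotically, the deferred bookkeeping must actually be displayed (as the paper does by targeting $1/2+1/2$ in its statements (A) and (B)); with your choices of $A$ and $N$ the numbers do work out, so this is labour rather than a missing idea.
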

\begin{proof}
Let us assume on the contrary that there exists $0<\epsilon<1/2$
such that $t\left(\beta\left(\epsilon\right)\right)\ge1/2-\beta\left(\epsilon\right)+\epsilon/2$.
This implies $-2\left(t\left(\beta\left(\epsilon\right)\right)+\beta\left(\epsilon\right)\right)\le-\left(1+\epsilon\right)$
as well as $-2t\left(\beta\left(\epsilon\right)\right)\le2\beta\left(\epsilon\right)-\epsilon-1\le2\beta\left(\epsilon\right)$.
Consequently, by definition of $t$ and Proposition \ref{pro:p estimates}
we would have \begin{eqnarray*}
0 & = & P\left(t\left(\beta\left(\epsilon\right)\right),\beta\left(\epsilon\right)\right)\\
 & \le & \frac{1}{2}\lg\sum_{\left(k,l\right)\in\N^{2}}\left(kl\right)^{-2\left(t\left(\beta\left(\epsilon\right)\right)+\beta\left(\epsilon\right)\right)}\left(1+\frac{1}{k\left(l+1\right)}\right)^{-2t\left(\beta\left(\epsilon\right)\right)}\\
 & \le & \frac{1}{2}\lg\sum_{\left(k,l\right)\in\N^{2}}\left(kl\right)^{-\left(1+\epsilon\right)}\left(1+\frac{1}{k\left(l+1\right)}\right)^{2\beta\left(\epsilon\right)}.\end{eqnarray*}
 To obtain a contradiction we will show that \[
\sum_{\left(k,l\right)\in\N^{2}}\left(kl\right)^{-\left(1+\epsilon\right)}\left(1+\frac{1}{k\left(l+1\right)}\right)^{2\beta\left(\epsilon\right)}<1.\]
 In fact, for $N\left(\epsilon\right):=\left(\epsilon/3\right)^{-2/\epsilon}$
we have for $0<\epsilon<1/2$ that 
\begin{itemize}
\item [(A)] ${\displaystyle {\displaystyle \sums_{k>N\left(\epsilon\right)\textrm{ or }l>N\left(\epsilon\right)}\left(kl\right)^{-\left(1+\epsilon\right)}\left(1+\frac{1}{k\left(l+1\right)}\right)^{2\beta\left(\epsilon\right)}}<1/2}$
~ ~ ~and 
\item [(B)]${\displaystyle \sums_{k\le N\left(\epsilon\right)\textrm{ and }l\le N\left(\epsilon\right)}\left(kl\right)^{-\left(1+\epsilon\right)}\left(1+\frac{1}{k\left(l+1\right)}\right)^{2\beta\left(\epsilon\right)}}<1/2$. 
\end{itemize}
To prove (A) notice that\\
 \\
 ${\displaystyle \sums_{k>N\left(\epsilon\right)\textrm{ or }l>N\left(\epsilon\right)}\left(kl\right)^{-\left(1+\epsilon\right)}\left(1+\frac{1}{k\left(l+1\right)}\right)^{2\beta\left(\epsilon\right)}}$\begin{eqnarray*}
 & \le & \sum_{k>N\left(\epsilon\right)}\sum_{l\in\N}\left(kl\right)^{-\left(1+\epsilon\right)}+\sum_{l>N\left(\epsilon\right)}\sum_{k\in\N}\left(kl\right)^{-\left(1+\epsilon\right)}\\
 & \le & 2\sum_{k>N\left(\epsilon\right)}k^{-\left(1+\epsilon\right)}\sum_{l\in\N}l^{-\left(1+\epsilon\right)}\end{eqnarray*}
 Then we have by integral comparison test for $M\in\N$ \begin{equation}
\sum_{k>M}k^{-\left(1+\epsilon\right)}\le\int_{M}^{\infty}x^{-\left(1+\epsilon\right)}dx=\frac{1}{\epsilon}M^{-\epsilon},\label{eq:keylemma integral criteria}\end{equation}
 Hence, for $0<\epsilon<1$, \[
\sum_{k>N\left(\epsilon\right)}k^{-\left(1+\epsilon\right)}\sum_{l\in\N}l^{-\left(1+\epsilon\right)}\le\frac{1}{\epsilon}N\left(\epsilon\right)^{-\epsilon}\left(\frac{1}{\epsilon}+1\right)\leq\frac{2}{\epsilon^{2}}N\left(\epsilon\right)^{-\epsilon}.\]
 With $N\left(\epsilon\right)=\left(\epsilon/3\right)^{-2/\epsilon}$
we get (A).

To verify (B) we use again (\ref{eq:keylemma integral criteria})
for $0<\epsilon<1$ to obtain\\
 \\
 ${\displaystyle \sums_{k\le N\left(\epsilon\right)\textrm{ and }l\le N\left(\epsilon\right)}\left(kl\right)^{-\left(1+\epsilon\right)}\left(1+\frac{1}{k\left(l+1\right)}\right)^{2\beta\left(\epsilon\right)}}$

\begin{eqnarray*}
 & \leq & \left(1+\frac{1}{N\left(\epsilon\right)\left(N\left(\epsilon\right)+1\right)}\right)^{2\beta\left(\epsilon\right)}\negmedspace\negmedspace\sum_{k\le N\left(\epsilon\right)}\sum_{l\le N\left(\epsilon\right)}\left(kl\right)^{-\left(1+\epsilon\right)}\\
 & \leq & \left(1+\frac{1}{2N\left(\epsilon\right)^{2}}\right)^{2\beta\left(\epsilon\right)}\frac{1}{\epsilon^{2}}.\end{eqnarray*}
 We are left to show that \begin{equation}
\left(1+\frac{1}{2N\left(\epsilon\right)^{2}}\right)^{2\beta\left(\epsilon\right)}\frac{1}{\epsilon^{2}}\le\frac{1}{2},\label{eq:keylem1}\end{equation}
 for $0<\epsilon<1/2$. Combining $N\left(\epsilon\right)=\left(\epsilon/3\right)^{-2/\epsilon}$
and $\beta\left(\epsilon\right)=(3/\log2)\cdot\log\left(\epsilon\right)\cdot\left(\epsilon/3\right)^{-4/\epsilon}$
gives $\beta\left(\epsilon\right)/N\left(\epsilon\right)^{2}=(3/\log2)\cdot\log\left(\epsilon\right)$.
Using this and the fact that $\log\left(1+x\right)\ge x\cdot\log(2),$
$x\in\left[0,1\right]$, we get \begin{eqnarray*}
\log\left(\left(1+\frac{1}{2N\left(\epsilon\right)^{2}}\right)^{2\beta\left(\epsilon\right)}\frac{1}{\epsilon^{2}}\right) & \le & 2\beta\left(\epsilon\right)\log\left(1+\frac{1}{2N\left(\epsilon\right)^{2}}\right)-2\log\left(\epsilon\right),\\
 & \leq & \log\left(2\right)\frac{\beta\left(\epsilon\right)}{N\left(\epsilon\right)^{2}}-2\log\left(\epsilon\right)\\
 & = & \log\left(\epsilon\right)<\log\left(1/2\right)\end{eqnarray*}
 for $0<\epsilon<1/2$. This proves (\ref{eq:keylem1}) and finishes
the proof of the lemma. 
\end{proof}

\section{Multifractal Analysis \label{sec:Multifractal-Analysis}}

In this section we prove our main theorems. In the first subsection
we prove the upper bound and in the second the lower bound for $f\left(\alpha\right)$.
For the upper bound we use a covering argument involving the $n$-th
partition function \[
Z_{n}\left(t,\beta\right):=\sum_{\omega\in\mathbb{N}^{n}}\exp\sup_{\tau\in[\omega]}\left(S_{n}t\varphi+\beta\psi\right)(\tau)\]
 which is also used to define the topological pressure $P\left(t,\beta\right)$.
To prove the lower bound we use the Thermodynamic Formalism to find
a measure $\mu$ such that on the one hand $\int\psi\, d\mu/\int\phi\, d\mu=\alpha$
and on the other hand $\mu$ maximises the quotient of the metrical
entropy $h_{\mu}$ and the Lyapunov exponent $\int\phi\, d\mu$. It
will turn out that this measure is in fact the equilibrium measure
for the potential $t\left(\beta\right)\phi+\beta\psi$. 

In the last subsection we prove Proposition \ref{pro:ramharter prop}
and analyse the boundary points of the spectrum. This part makes extensive
use of some number theoretical estimates depending heavily on the
recursive nature of the Diophantine approximation.

\subsection{Upper bound}

For the upper bound we apply a covering argument to the set $\mathcal{F}_{\alpha}^{*}$.
\begin{prop}
\label{pro:upperbound}For $\alpha\in\R$ we have \[
\dim_{H}\left(\mathcal{F}_{\alpha}\right)\leq\dim_{H}\left(\mathcal{F}_{\alpha}^{*}\right)\le\max\left\{ \inf_{\beta\in\R}\left\{ t(\beta)+\beta\alpha\right\} ,0\right\} .\]
 If there exists $\beta\in\R$, such that $t\left(\beta\right)+\beta\alpha<0$
then we have $\mathcal{F}_{\alpha}^{*}=\emptyset$. \end{prop}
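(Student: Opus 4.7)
The plan is a covering argument on cylinder sets that exploits the uniform boundedness of the partition functions $Z_n(t(\beta),\beta)$ for every $\beta$—a consequence of $P(t(\beta),\beta)=0$ together with the Gibbs property of $\mu_\beta$ guaranteed by Proposition \ref{pro:existence of invariant gibbs}. Since $\mathcal{F}_\alpha \subset \mathcal{F}_\alpha^*$ is immediate from Fact \ref{fact6}, it suffices to bound $\dim_H(\mathcal{F}_\alpha^*)$.

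Fix $s > \max\{\inf_\beta\{t(\beta)+\beta\alpha\},0\}$ and choose $\beta$ with $t(\beta)+\beta\alpha < s$. The strict convexity of $t$ together with $t'(0) = -\alpha_0$ (Proposition \ref{pro:exTf(beta)}) lets one take $\beta \leq 0$ when $\alpha \geq \alpha_0$ and $\beta \geq 0$ when $\alpha < \alpha_0$; this sign choice is what will make the comparison go through. By (\ref{eq:4}), membership in $\mathcal{F}_\alpha^*$ is equivalent to the corresponding limsup/liminf condition on $S_n\psi/S_n\varphi$, so for every $\delta > 0$ and every $x \in \mathcal{F}_\alpha^*$ the word $\omega|_n$ (with $\omega := \pi(x)$) lies in
\[
A_n(\delta) := \bigl\{\omega \in \N^n : S_n\psi(\omega) \leq (\alpha - \delta)\,S_n\varphi(\omega)\bigr\}
\]
for infinitely many $n$ (with $\leq$ and $\alpha - \delta$ replaced by $\geq$ and $\alpha + \delta$ when $\alpha < \alpha_0$). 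Consequently, for every $N$ the cylinders $\{\pi^{-1}[\omega]:n\geq N,\ \omega\in A_n(\delta)\}$ form a cover of $\mathcal{F}_\alpha^*$ with mesh tending to $0$ as $N\to\infty$.

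The computational core is the pointwise comparison on $A_n(\delta)$,
\[
\exp\bigl(s S_n\varphi(\omega)\bigr) \leq \exp\bigl(t(\beta) S_n\varphi(\omega) + \beta S_n\psi(\omega)\bigr)\cdot\exp\bigl(\varepsilon\,S_n\varphi(\omega)\bigr),
\]
where $\varepsilon := s - t(\beta) - \beta(\alpha \mp \delta) > 0$ for $\delta$ small. The sign of $\beta$ is precisely what guarantees that the defining inequality of $A_n(\delta)$, once multiplied by $\beta$, yields this direction. Using (\ref{eq:boundeddistortion}) to identify $\exp(sS_n\varphi(\omega))$ with $\diam(\pi^{-1}[\omega])^s$ up to constants, and $|S_n\varphi(\omega)|\geq 2n\log\gamma - O(1)$ (from $q_n\geq f_n\gg\gamma^n$), the second factor is at most $\exp(-\varepsilon c\,n)$ for some $c>0$. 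Summing over $\omega\in A_n(\delta)\subset\N^n$ and invoking
\[
\sum_{\omega \in \N^n}\exp\bigl(t(\beta) S_n\varphi(\omega) + \beta S_n\psi(\omega)\bigr) = Z_n(t(\beta),\beta) = O(1),
\]
which follows from the Gibbs bound (\ref{eq:gibbsequ}) for $\mu_\beta$ since $P(t(\beta),\beta)=0$, one sees the $n$-th layer of the cover contributes at most $C\,e^{-\varepsilon c\,n}$. Summing over $n\geq N$ and letting $N\to\infty$ gives $\mathcal{H}^s(\mathcal{F}_\alpha^*)=0$, hence $\dim_H(\mathcal{F}_\alpha^*)\leq s$, and the bound follows by letting $s$ decrease to the infimum.

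For the emptiness claim, suppose $t(\beta_0) + \beta_0\alpha < 0$ for some $\beta_0$. By convexity, $\beta_0$ may be chosen with the sign prescribed above. If $x\in\mathcal{F}_\alpha^*$ existed then along the infinitely-often subsequence the same comparison would give
\[
\exp\bigl(t(\beta_0) S_n\varphi(\omega|_n) + \beta_0 S_n\psi(\omega|_n)\bigr) \geq \exp\bigl((t(\beta_0)+\beta_0(\alpha\mp\delta))\,S_n\varphi(\omega|_n)\bigr)\to +\infty,
\]
contradicting $Z_n(t(\beta_0),\beta_0)=O(1)$; hence $\mathcal{F}_\alpha^*=\emptyset$. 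The main technical obstacle throughout is sign bookkeeping: both $\varphi$ and $\psi$ are negative, so an inequality on $S_n\psi/S_n\varphi$ flips when cleared, and $\beta$ must be chosen on the same side of $0$ as the minimizer of $\beta\mapsto t(\beta)+\beta\alpha$ so that the $A_n(\delta)$-condition becomes a usable lower bound on $t(\beta)S_n\varphi+\beta S_n\psi$ rather than an upper bound.
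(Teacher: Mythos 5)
Your proof is correct and follows essentially the same route as the paper: a cover of $\mathcal{F}_\alpha^*$ by cylinders on which the limsup/liminf condition holds, with the sign of $\beta$ chosen so that the defining inequality turns $sS_n\varphi$ into the Gibbs weight $t(\beta)S_n\varphi+\beta S_n\psi$ times an exponentially small factor, and summation of the resulting layer sums. The only (harmless) deviations are cosmetic: you get geometric decay from $Z_n(t(\beta),\beta)=O(1)$ (Gibbs property, legitimate since $P(t(\beta),\beta)=0<\infty$ forces $2(t(\beta)+\beta)>1$) combined with $q_n\gg\gamma^n$, where the paper instead perturbs the exponent to $t(\beta)+\epsilon/2$ and uses $P(t(\beta)+\epsilon/2,\beta)<0$; and your emptiness argument (a single cylinder weight blowing up against a bounded partition sum) is a direct, slightly cleaner version of the paper's negative-exponent Hausdorff measure remark.
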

\begin{proof}
The first inequality follows from $\mathcal{F}_{\alpha}\subset\mathcal{F}_{\alpha}^{*}$.
For the second we make the following assumption. For all $\beta\in\R$
and $\epsilon>0$ we have $\mathcal{H}^{t(\beta)+\beta\alpha+\epsilon}\left(\mathcal{F}_{\alpha}^{*}\right)<\infty$,
where $\mathcal{H}^{s}$ denotes the $s$-dimensional Hausdorff measure
(see \cite{falconerfractalgeometryMR2118797} for this and related
notions from fractal geometry). If then $t(\beta)+\beta\alpha\ge0$
we can conclude, that $\dim_{H}\left(\mathcal{F}_{\alpha}^{*}\right)\le t(\beta)+\beta\alpha$.
If on the other hand there exists $\beta\in\R$ such that $t\left(\beta\right)+\beta\alpha<0,$
then we would have $\mathcal{H}^{s}\left(\mathcal{F}_{\alpha}^{*}\right)<\infty$
for some $s<0$. This clearly gives $\mathcal{F}_{\alpha}^{*}=\emptyset$
and consequently $\dim_{H}\left(\mathcal{F}_{\alpha}^{*}\right)=0$.

Now we are left to prove the assumption. We will only consider the
case $\alpha\geq\alpha_{0}$ (the case $\alpha<\alpha_{0}$ can be
treated in a completely analogous way). Then with out loss of generality
we may assume that $\beta\leq0$ (otherwise $t(\beta)+\beta\alpha\geq1$)
. For $r,\delta>0$ fixed we are going to construct a $\delta$-covering
of $\Pi\left(\mathcal{F}_{\alpha}^{*}\right)$. Since the Gauss system
is uniformly contractive, for each $\omega\in\pi\left(\mathcal{F}_{\alpha}^{*}\right)$
there exists $n(\omega,\delta,r)$ such that \begin{equation}
\frac{S_{n(\omega,\delta,r)}\psi\left(\omega\right)}{S_{n(\omega,\delta,r)}\varphi\left(\omega\right)}\geq\alpha-r\label{eq:snpsi durch snphi in r umgebung von alpha}\end{equation}
 and \begin{equation}
\diam\left(\pi^{-1}[\omega_{|n(\omega,\delta,r)}]\right)<\delta.\label{eq:cylinder kleiner delta}\end{equation}
 We surely have $\pi\left(\mathcal{F}_{\alpha}^{*}\right)\subset\bigcup_{\omega\in\Pi\left(\mathcal{F}_{\alpha}\right)}\omega_{|n(\omega,\delta,r)}$.
Removing duplicates from the cover, we obtain an at most countable
$\delta$-cover $\omega_{|n(\omega^{(i)},\delta,r)}^{(i)}$ with $i\in\N$,
because there are only countably many finite words over a countable
alphabet.

We will now prove $\mathcal{H}^{t(\beta)+\beta\alpha+\epsilon}\left(\mathcal{F}_{\alpha}^{*}\right)<\infty$
for fixed $\epsilon>0$. Using the cover constructed above we have
by the bounded distortion property (\ref{eq:boundeddistortion}) that
there exists a constant $C>0$ such that \begin{eqnarray*}
\mathcal{H}_{\delta}^{t(\beta)+\beta\alpha+\epsilon}\left(\mathcal{F}_{\alpha}^{*}\right) & \le & \sum_{i\in\N}\diam\left(U_{i}(\delta,r)\right)^{t(\beta)+\beta\alpha+\epsilon}\\
 & = & \sum_{i\in\N}\diam\left(\pi^{-1}[\omega_{|n(\omega^{(i)},\delta,r)}^{(i)}]\right)^{t(\beta)+\beta\alpha+\epsilon}\\
 & \le & C\sum_{i\in\N}\exp\left[S_{n(\omega^{(i)},\delta,r)}\varphi(\omega^{(i)})\left(t(\beta)+\beta\alpha+\epsilon\right)\right].\end{eqnarray*}
 Now choose $r>0$ so small, such that for all $i\in\N$ we have \[
\beta\alpha+\epsilon/2>\beta\frac{S_{n(\omega^{(i)},\delta,r)}\psi(\omega^{(i)})}{S_{n(\omega^{(i)},\delta,r)}\varphi(\omega^{(i)})}.\]
Since $S_{n}\varphi<0$ we have\\
 \\
 $\mathcal{H}_{\delta}^{t(\beta)+\beta\alpha+\epsilon}\left(\mathcal{F}_{\alpha}^{*}\right)$\begin{eqnarray*}
 & \leq & C\sum_{i\in\N}\exp\left[S_{n(\omega^{(i)},\delta,r)}\varphi(\omega^{(i)})\left(t(\beta)+\beta\frac{S_{n(\omega^{(i)},\delta,r)}\psi(\omega^{(i)})}{S_{n(\omega^{(i)},\delta,r)}\varphi(\omega^{(i)})}+\frac{\epsilon}{2}\right)\right]\\
 & = & C\sum_{i\in\N}\exp\left[S_{n(\omega^{(i)},\delta,r)}\varphi(\omega^{(i)})\left(t(\beta)+\frac{\epsilon}{2}\right)+\beta S_{n(\omega^{(i)},\delta,r)}\psi(\omega^{(i)})\right]\\
 & = & C\sum_{i\in\N}\exp\left[S_{n(\omega^{(i)},\delta,r)}\left(\left(t(\beta)+\frac{\epsilon}{2}\right)\varphi+\beta\psi\right)(\omega^{(i)})\right]\\
 & \leq & C\sum_{n\in\N}\sum_{\omega\in\Sigma^{n}}\exp\sup_{\tau\in[\omega]}\left[S_{n}\left(\left(t(\beta)+\frac{\epsilon}{2}\right)\varphi+\beta\psi\right)\right].\end{eqnarray*}
 Since we have $P\left(t(\beta),\beta\right)=0$ by definition of
$t\left(\beta\right)$ and the fact that the pressure $P$ is strictly
decreasing with respect to the first component (see (\ref{eq:pressure bounded away from zero})
in the proof of Proposition \ref{pro:exTf(beta)}), we conclude that
$P\left(t(\beta)+\epsilon/2,\beta\right)=\eta<0$. This implies \[
\sum_{\omega\in\Sigma^{n}}\exp\sup_{\tau\in[\omega]}\left(S_{n}\left(\left(t(\beta)+\frac{\epsilon}{2}\right)\varphi+\beta S_{n}\psi\right)\right)\ll\exp\left(n\frac{\eta}{2}\right).\]
Hence, there exists another positive constant $C'$ such that for
all $\delta>0$ we have $\mathcal{H}_{\delta}^{t(\beta)+\beta\alpha+\epsilon}\left(\mathcal{F}_{\alpha}^{*}\right)\le C'\sum_{n\in\N}\exp\left(n\eta/2\right)<\infty$.
This implies $\mathcal{H}^{t(\beta)+\beta\alpha+\epsilon}\left(\mathcal{F}_{\alpha}^{*}\right)<\infty$
showing that $\dim_{H}\left(\mathcal{F}_{\alpha}^{*}\right)\le t(\beta)+\beta\alpha+\epsilon$
. The claim follows by letting $\epsilon$ tend to zero. 
\end{proof}

\subsection{Lower bound}

For the lower bound we use the Volume Lemma (\cite[Theorem 4.4.2]{urbanskimauldin-gdmsMR2003772}),
which in our situation can be stated as follows. Let $\mu$ be a $\sigma$-invariant
probability on $\N^{\N}$ such that either $\sum_{k\in\N}\mu\left(\left[k\right]\right)\log\left(\mu\left(\left[k\right]\right)\right)<\infty$
or $\int\phi\, d\mu<\infty$. Then \begin{equation}
\HD\left(\mu\right)=\frac{h_{\mu}}{\int\varphi\, d\mu},\label{eq:8}\end{equation}
where $\HD\left(\mu\right):=\inf\left\{ \dim_{H}\left(Y\right):Y\subset\mathbb{I},\,\mbox{measurable, }\mu(Y)=1\right\} $.
In the following $\Im\left(g\right)$ will denote the image of the
function $g$. 
\begin{prop}
\label{pro:lowerbound}For $\alpha\in-\Im(t')$ we have \[
\dim_{H}\left(\mathcal{F}_{\alpha}^{*}\right)\geq\dim_{H}\left(\mathcal{F}_{\alpha}\right)\ge\inf_{\beta\in\R}\left\{ t(\beta)+\beta\alpha\right\} >0.\]
\end{prop}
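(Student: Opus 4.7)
The plan is to exhibit, for each $\alpha\in -\Im(t')$, a shift-invariant probability measure whose (pushforward) Hausdorff dimension realises the claimed lower bound and which is concentrated on $\pi(\mathcal{F}_\alpha)$. The natural candidate is the invariant Gibbs state $\mu:=\mu_{\beta_0}$ for the potential $t(\beta_0)\varphi+\beta_0\psi$, where $\beta_0\in\R$ is chosen so that $-t'(\beta_0)=\alpha$; such a $\beta_0$ exists by hypothesis and the corresponding Gibbs state exists by Proposition \ref{pro:existence of invariant gibbs}, because $P(t(\beta_0),\beta_0)=0$ forces $2(t(\beta_0)+\beta_0)>1$ via Lemma \ref{lem:pressureFiniteIFF}.

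First I would show $\mu(\pi(\mathcal{F}_\alpha))=1$. By Lemma \ref{lem:integralfinitepropertychecked} both $\varphi$ and $\psi$ are $\mu$-integrable, so Birkhoff's ergodic theorem (applied to the ergodic measure $\mu$) yields $S_n\psi/n\to\int\psi\,d\mu$ and $S_n\varphi/n\to\int\varphi\,d\mu$ $\mu$-almost surely. The formula (\ref{eq:7}) for $t'(\beta_0)$ rewrites as $\int\psi\,d\mu/\int\varphi\,d\mu=-t'(\beta_0)=\alpha$, so $S_n\psi/S_n\varphi\to\alpha$ $\mu$-a.s., and Lemma \ref{lem:pressure is pressure} translates this into the arithmetic-geometric scaling limit $\alpha$ for $\pi^{-1}$-a.e.\ point. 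Hence $\mu(\pi(\mathcal{F}_\alpha))=1$, giving $\dim_H(\mathcal{F}_\alpha)\ge\HD(\mu)$.

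Next I would compute $\HD(\mu)$ via the Volume Lemma (\ref{eq:8}). Using that $\mu$ is the equilibrium state for $t(\beta_0)\varphi+\beta_0\psi$ and that $P(t(\beta_0),\beta_0)=0$, the variational identity gives
\[
h_\mu \;=\; -t(\beta_0)\!\int\!\varphi\,d\mu \;-\; \beta_0\!\int\!\psi\,d\mu .
\]
Dividing by $\int\varphi\,d\mu$ and using $\int\psi\,d\mu/\int\varphi\,d\mu=\alpha$ yields $\HD(\mu)=t(\beta_0)+\beta_0\alpha$. By the strict convexity of $t$ (Proposition \ref{pro:exTf(beta)}), the function $\beta\mapsto t(\beta)+\beta\alpha$ is strictly convex and its unique minimiser solves $t'(\beta)+\alpha=0$, i.e.\ is exactly $\beta_0$. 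Therefore $\HD(\mu)=\inf_{\beta\in\R}\{t(\beta)+\beta\alpha\}$, proving the asserted lower bound.

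Finally, to see $\inf_\beta\{t(\beta)+\beta\alpha\}>0$, it suffices to observe that $\HD(\mu)>0$, which is equivalent to $h_\mu>0$ since $\chi_\mu=-\int\varphi\,d\mu\in(0,\infty)$. The measure $\mu$ is an invariant Gibbs state on the full shift $\N^\N$ with full support, hence is not a Dirac mass on a periodic orbit, and positivity of its entropy follows (for instance from the Gibbs property (\ref{eq:gibbsequ}) which forces exponential decay of cylinder measures). The main technical obstacle is the verification that the Volume Lemma applies, i.e.\ that the integrability hypothesis in (\ref{eq:8}) holds; this is precisely supplied by Lemma \ref{lem:integralfinitepropertychecked}, so the argument goes through without further work.
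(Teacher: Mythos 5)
Your construction is essentially the paper's: for $\alpha\in-\Im(t')$ you take $\beta_0=(t')^{-1}(-\alpha)$, work with the invariant Gibbs/equilibrium state $\mu_{\beta_0}$ from Proposition \ref{pro:existence of invariant gibbs}, compute $\HD(\mu_{\beta_0})=t(\beta_0)+\beta_0\alpha$ from the Volume Lemma together with $P\left(t(\beta_0),\beta_0\right)=0$ and (\ref{eq:7}), and obtain $\mu_{\beta_0}(\mathcal{F}_\alpha)=1$ from Birkhoff's theorem and Lemma \ref{lem:pressure is pressure}; identifying $t(\beta_0)+\beta_0\alpha$ with $\inf_{\beta}\left\{t(\beta)+\beta\alpha\right\}$ by elementary strict convexity of $\beta\mapsto t(\beta)+\beta\alpha$ instead of quoting Rockafellar is a harmless variation. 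The one genuine divergence is the final strict positivity. The paper gets it almost for free: $\HD(\mu_{\beta_0})=-\hat{t}(-\alpha)\ge0$ because Hausdorff dimension is nonnegative, and a strictly concave function that is nonnegative on the open interval $-\Im(t')$ must be strictly positive at interior points. You instead try to show $h_{\mu_{\beta_0}}>0$, and as written this step is under-justified: full support of the Gibbs state by itself does not give positive entropy, and the asserted ``exponential decay of cylinder measures'' is not immediate from (\ref{eq:gibbsequ}), because $\sup\left(t(\beta_0)\varphi+\beta_0\psi\right)=0$ (take $\omega_1=1$ and $\pi^{-1}(\omega)$ close to $1$), so the one-step Gibbs bound yields nothing. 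The decay is nevertheless true and your route can be completed: by (\ref{eq:4}) and $\prod_{i=1}^{n}\omega_i\le q_n(\omega)$ one has $S_n\left(t(\beta_0)\varphi+\beta_0\psi\right)\le-2\left(t(\beta_0)+\min\{\beta_0,0\}\right)\log q_n(\omega)+C$, where $t(\beta_0)+\min\{\beta_0,0\}\ge\min\left\{t(\beta_0),\,t(\beta_0)+\beta_0\right\}>0$ (Lemma \ref{lem:pressureFiniteIFF} gives $t(\beta_0)+\beta_0>1/2$ on the zero-pressure curve, and $t(\beta_0)>0$ since $P(0,\beta)=\log\zeta(2\beta)>0$ whenever finite) and $q_n(\omega)\ge\gamma^{n-1}$; combined with the Gibbs property and $P=0$ this gives $\mu_{\beta_0}\left(\left[\omega|_n\right]\right)\le C'\mathrm{e}^{-cn}$ uniformly, hence $h_{\mu_{\beta_0}}\ge c>0$. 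With that estimate supplied your argument closes; without it the positivity step is a gap, and the paper's convexity argument is the shorter way around it.
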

\begin{proof}
Again, as for the upper bound, the first inequality is immediate.
For $\alpha\in-\Im(t')$ let $\beta=\left(t'\right)^{-1}\left(-\alpha\right)$.
Since $\varphi\in\mathcal{L}^{1}\left(\mu_{t\left(\beta\right)\varphi+\beta\psi}\right)$
we have by the Volume Lemma, Proposition \ref{pro:existence of invariant gibbs},
the fact that $P\left(t\left(\beta\right),\beta\right)=0$, and (\ref{eq:7})
that \begin{eqnarray*}
\HD\left(\mu_{\beta}\right) & = & \frac{h_{\mu_{\beta}}}{\int\varphi\, d\mu_{\beta}}=\frac{\int t\left(\beta\right)\varphi+\beta\psi\, d\mu_{\beta}}{\int\varphi\, d\mu_{\beta}}\\
 & = & t\left(\beta\right)-\beta t'\left(\beta\right)\\
 & = & t\left(\left(t'\right)^{-1}\left(-\alpha\right)\right)+\left(t'\right)^{-1}\left(-\alpha\right)\alpha\\
 & = & -\hat{t}\left(-\alpha\right),\end{eqnarray*}
where the last equality holds by \cite[Theorem 26.4]{rockafellar-convexanalysisMR0274683}.
By (\ref{eq:8}) we have $-\hat{t}\left(-\alpha\right)\ge0$. Furthermore,
since $t$ is strictly convex (Proposition \ref{pro:exTf(beta)})
we conclude with \cite[Corollary 26.4.1]{rockafellar-convexanalysisMR0274683}
that also the Legendre conjugate $\hat{t}$ is strictly convex on
$\Im(t')$. Hence, for $\alpha\in-\Im(t')$ we have \begin{equation}
\HD\left(\mu_{\beta}\right)=\inf_{c\in\R}\left\{ t(c)+c\alpha\right\} >0.\label{eq:9}\end{equation}
 Since $\mu_{\beta}$ is ergodic (Proposition \ref{pro:existence of invariant gibbs})
we have by the Ergodic Theorem, the choice of $\beta$ and (\ref{eq:7})
that \[
\lim_{n\rightarrow\infty}\frac{S_{n}\psi\left(\omega\right)}{S_{n}\varphi\left(\omega\right)}=\frac{\int\psi\, d\mu_{\beta}}{\int\varphi\, d\mu_{\beta}}=\alpha\quad\quad\mbox{for }\mu_{\beta}\mbox{-a.e.}\;\omega.\]
 This gives $\mu_{\beta}\left(\mathcal{F}_{\alpha}\right)=1$, which
together with (\ref{eq:9}) and the definition of $\HD\left(\mu_{\beta}\right)$
finishes the proof. 
\end{proof}
Now we can prove the main theorem neglecting the boundary points.

\emph{Proof of first part of Theorem \ref{thm:maintheorem}}.  Clearly,
$\mathcal{F_{\alpha}}\subset\mathcal{F}_{\alpha}^{*}$. Combining
Proposition \ref{pro:upperbound} and Proposition \ref{pro:lowerbound}
gives $f\left(\alpha\right)=\inf_{\beta\in\R}\left\{ t(\beta)+\beta\alpha\right\} $
for $\alpha\in-\Im\left(t'\right)$. Since also by Proposition \ref{pro:lowerbound}
$f\left(\alpha\right)>0$ for $\alpha\in-\Im\left(t'\right)$ we conclude
that $-\Im\left(t'\right)$ (which is an open set) is contained in
$\left(0,1\right)$. Furthermore, for $\alpha\notin-\overline{\Im\left(t'\right)}$,
we have $\inf_{\beta\in\R}\left\{ t(\beta)+\beta\alpha\right\} =-\infty$
(\cite[Corollary 26.4.1]{rockafellar-convexanalysisMR0274683}), hence
by Proposition \ref{pro:upperbound} we have $\mathcal{F}_{\alpha}=\emptyset$.
Since $\mathcal{F}_{0}$ and $\mathcal{F}_{1}$ are not empty, we
have $-\Im\left(t'\right)=\left(0,1\right)$. Notice that $f\left(\alpha\right)=-\widehat{t}\left(-\alpha\right)$
for $\alpha\in\left(0,1\right)$ and by \cite[Theorem 26.5]{rockafellar-convexanalysisMR0274683}
\begin{equation}
f'\left(\alpha\right)=\left(\widehat{t}\right)'\left(-\alpha\right)=\left(t'\right)^{-1}\left(-\alpha\right).\label{eq:10}\end{equation}
 Since $t'$ is strictly increasing we conclude, that $f$ is strictly
concave and by the inverse function theorem that $f$ is real-analytic.
\qed

\subsection{Boundary points}

In the last section we finish the proof of Theorem \ref{thm:maintheorem}
and give a proof of Proposition \ref{pro:ramharter prop} and Theorem
\ref{thm:Asymp}.

\emph{Proof of the remaining parts of Theorem \ref{thm:maintheorem}.}
We have to show 
\begin{itemize}
\item [(a)]\emph{$\lim_{\alpha\searrow0}f'\left(\alpha\right)=\infty$
and $\lim_{\alpha\nearrow1}f'\left(\alpha\right)=-\infty$}, 
\item [(b)] $\lim_{\alpha\searrow0}\dim_{H}\left(\mathcal{F}_{\alpha}\right)=\dim_{H}\left(\mathcal{F}_{0}\right)=0$, 
\item [(c)] $\lim_{\alpha\nearrow1}\dim_{H}\left(\mathcal{F}_{\alpha}\right)=\dim_{H}\left(\mathcal{F}_{1}\right)=1/2.$ 
\end{itemize}
The assertion in (a) follows directly from equation (\ref{eq:10}).
To prove (b) notice that by (\ref{eq:pressure bounded away from zero})
and the definition of $t$ we have for $\beta\in\R$ \[
0=P\left(t\left(\beta\right),\beta\right)\le-2t\left(\beta\right)\log\left(\gamma\right)+P\left(0,\beta\right)\]
 which implies $t\left(\beta\right)\le\frac{P\left(0,\beta\right)}{2\log\left(\gamma\right)}$.
Since $P\left(0,\beta\right)=\lg\left(\zeta\left(2\beta\right)\right)$,
which tends to zero for $\beta\rightarrow\infty$, we conclude that
$\lim_{\beta\rightarrow\infty}$$t\left(\beta\right)=0$. By the upper
bound in Proposition \ref{pro:upperbound}, we have that $\dim_{H}\left(\mathcal{F}_{\alpha}\right)$
is dominated by $\inf_{\beta\in\R}\left\{ t\left(\beta\right)+\beta\alpha\right\} $,
which becomes arbitrarily small for $\alpha\searrow0$ and which is
equal to zero for $\alpha=0$.

To prove the lower bounds in part (c) of the proposition we first
notice that for $\alpha=-t'\left(\beta\right)$ such that $1>\alpha>-t'\left(0\right)$
we have $\beta<0$. By the lower bound in Proposition \ref{pro:lowerbound}
we have on the one hand $\dim_{H}\left(\mathcal{F}_{\alpha}\right)\ge t\left(\beta\right)+\beta\alpha$.
By Lemma \ref{lem:pressureFiniteIFF} we have $P\left(t,\beta\right)<\infty$,
if and only if $t+\beta>1/2$. Since $P\left(t\left(\beta\right),\beta\right)=0<\infty$
we can conclude that on the other hand we have $t\left(\beta\right)>1/2-\beta$.
Combining these two observations we have $\dim_{H}\left(\mathcal{F}_{\alpha}\right)\ge t\left(\beta\right)+\beta\alpha>1/2-\beta\left(1-\alpha\right)>1/2$
for $\alpha\in\left(-t'\left(0\right),1\right)$. Since $\mathcal{G}\subset\mathcal{F}_{1}$
it follows directly from (\ref{eq:increasing sequ dim 1 2}) that
$\dim_{H}\left(\mathcal{F}_{1}\right)\ge1/2$.

To finally prove the upper bounds in (c) fix $\epsilon>0$. Lemma
\ref{lem:tAsymptoticFor-infty} guarantees that there exists $\beta_{0}\in\R$
such that for all $\beta\le\beta_{0}$ we have $t\left(\beta\right)<1/2-\beta+\epsilon$.
Using Proposition \ref{pro:upperbound} we have for $\alpha\in\left(0,1\right)$
\begin{eqnarray*}
f\left(\alpha\right)=\dim_{H}\left(\mathcal{F}_{\alpha}\right) & \le & \inf_{c\in\R}\left\{ t\left(c\right)+c\alpha\right\} \le\inf_{c\le\beta_{0}}\left\{ 1/2+\epsilon-c\left(1-\alpha\right)\right\} \\
 & \leq & 1/2+\epsilon-\beta_{0}\left(1-\alpha\right)\to1/2+\epsilon\qquad\mbox{for }\alpha\nearrow1.\end{eqnarray*}
 Since $\epsilon>0$ was arbitrary we have both $\limsup_{\alpha\nearrow1}\dim_{H}\left(\mathcal{F}_{\alpha}\right)\le1/2$
and $\dim_{H}\left(\mathcal{F}_{1}\right)\le1/2$. In particular,
since $\hat{t}$ is continuous on $\left[0,1\right]$ it follows that
$f$ and $a\mapsto-\hat{t}\left(-a\right)$ agree on $\left[0,1\right]$.
\qed

\emph{Proof of Proposition \ref{pro:ramharter prop}.} We are going
to apply our multifractal formalism to the Gauss system restricted
to the state space $\mathcal{I}_{q}$, $q\in\N$. In particular, we
introduce the restricted pressure\[
P_{q}\left(t,\beta\right):=\lim_{n\rightarrow\infty}\frac{1}{n}\lg\sum_{\omega\in\left\{ q,q+1,\ldots\right\} ^{n}}q_{n}\left(\omega\right)^{-2t}\prod_{i=1}^{n}\omega_{i}^{-2\beta},\quad t,\beta\in\R.\]
 Arguing as in the proof of Proposition \ref{pro:exTf(beta)}, we
find a real-analytic function $t_{q}\colon\R\to\R$ such that $P_{q}\left(t_{q}\left(\beta\right),\beta\right)=0$
for all $\beta\in\R$. By Bowen's Formula (cf. \cite[Theorem 4.2.13]{urbanskimauldin-gdmsMR2003772})
we have that \[
\dim_{H}\left(\mathcal{I}_{q}\right)=\inf\left\{ t\in\R:P\left(t\phi\right)<0\right\} =t_{q}\left(0\right).\]
Using $\prod_{k=1}^{n}a_{k}\left(x\right)\leq q_{n}\left(x\right)\leq\prod_{k=1}^{n}\left(a_{k}\left(x\right)+1\right)$
and $t_{q}\left(0\right)\geq0$ we find\[
\sum_{k\ge q+1}k^{-2\left(t_{q}\left(0\right)-1/2\right)-1}\le e^{P_{q}\left(t_{q}\left(0\right),0\right)}\le\sum_{k\ge q}k^{-2\left(t_{q}\left(0\right)-1/2\right)-1}.\]
 Integral comparison test gives \[
\frac{1}{2\left(t_{q}\left(0\right)-1/2\right)}\left(q+1\right)^{-2\left(t_{q}\left(0\right)-1/2\right)}\le1\le\frac{1}{2\left(t_{q}\left(0\right)-1/2\right)}\left(q-1\right)^{-2\left(t_{q}\left(0\right)-1/2\right)},\]
 which is equivalent to \[
q-1\le\left(2\cdot\left(t_{q}\left(0\right)-1/2\right)\right)^{-1/\left(2\cdot\left(t_{q}\left(0\right)-1/2\right)\right)}\le q+1.\]
 This proves $t_{q}\left(0\right)-1/2\thicksim1/2\cdot\log\left(\log\left(q\right)\right)/\log\left(q\right)$.
\qed

\emph{Proof of Theorem \ref{thm:Asymp}.} Using Proposition \ref{pro:upperbound}
with $\beta\colon\epsilon\mapsto\frac{3}{\log(2)}\log\left(\epsilon\right)\left(\frac{\epsilon}{3}\right)^{-\frac{4}{\epsilon}}$
from Lemma \ref{lem:tAsymptoticFor-infty} we have for $\epsilon<1/2$
and $\delta\in\left(0,1\right)$\begin{eqnarray*}
f\left(1-\delta\right)=\dim_{H}\left(\mathcal{F}_{\alpha}^{*}\right) & \le & \inf_{c\in\R}\left\{ t\left(c\right)+c\left(1-\delta\right)\right\} \le1/2+\frac{\epsilon}{2}-\beta\left(\epsilon\right)\delta.\end{eqnarray*}
 Now with $\epsilon\left(\delta\right):=4\log\left(\log\left(1/\delta\right)\right)/\log\left(1/\delta\right)$,
we have for $\delta\to0$\\
\\
${\displaystyle \frac{-\beta\left(\epsilon\left(\delta\right)\right)}{\epsilon\left(\delta\right)}\delta}$\[
=-\frac{3\delta\log(1/\delta)\left(2\log(2)+\log\left(\frac{\log(\log(1/\delta)}{\log\left(1/\delta\right)}\right)\right)\left(\frac{4\log(\log(1/\delta))}{3\log\left(1/\delta\right)}\right)^{\frac{-\log(1/\delta)}{\log(\log(1/\delta))}}}{4\log\left(2\right)\log(\log(1/\delta))}\to0.\]
 This proves $f\left(1-\delta\right)\leq1/2+d\cdot\log\log\left(1/\delta\right)/\log\left(1/\delta\right)$
for any $d>2$ and for $\delta>0$ sufficiently small.

For the proof of the lower bound we make use of Fact \ref{fact6}
and Proposition \ref{pro:ramharter prop}. First we show that for
sufficiently small $\delta>0$ we have \begin{equation}
\mathcal{I}_{q\left(\delta\right)}\subset\mathcal{F}_{1-\delta}^{*}\;\textrm{ where }\; q\left(\delta\right):=\left(\frac{\delta}{3}\cdot\log\left(1/\delta\right)\right)^{-\frac{1}{2}}.\label{eq:q vs delta}\end{equation}
 In fact, (\ref{eq:q vs delta}) follows from Fact \ref{fact6} since
for $\delta\to0$ we have \begin{eqnarray*}
1-\alpha_{q\left(\delta\right)} & = & \left(\left(q\left(\delta\right)\right)^{2}\log\left(q\left(\delta\right)\right)\right)^{-1}\\
 & = & \left(\frac{\delta}{3}\cdot\log\left(1/\delta\right)\right)\cdot\left(\log\left(\left(\frac{\delta}{3}\cdot\log\left(1/\delta\right)\right)^{-\frac{1}{2}}\right)\right)^{-1}\\
 & = & \frac{\delta}{3}\cdot\log\left(1/\delta\right)\cdot\left(-2\right)\cdot\left(\log\left(\frac{\delta}{3}\cdot\log\left(1/\delta\right)\right)\right)^{-1}\sim\frac{2}{3}\delta.\end{eqnarray*}

Now by Proposition \ref{pro:ramharter prop} and (\ref{eq:q vs delta})
we have for $c<1/2$ and sufficiently small $\delta>0$ \begin{eqnarray*}
\dim_{H}\left(\mathcal{F}_{1-\delta}^{*}\right) & \ge & \frac{1}{2}+c\left(\frac{\log\log q\left(\delta\right)}{\log q\left(\delta\right)}\right)\\
 & \ge & \frac{1}{2}+c\cdot\frac{\log\log\left(\left(\delta/3\cdot\log\left(1/\delta\right)\right)^{-\frac{1}{2}}\right)}{\log\left(\delta/3\cdot\log\left(1/\delta\right)\right)^{-\frac{1}{2}}}\\
 & = & \frac{1}{2}+2c\cdot\frac{\log\left(1/2\cdot\log\left(3/\delta\right)-1/2\cdot\log\log\left(1/\delta\right)\right)}{\log\left(3/\delta\right)-\log\log\left(1/\delta\right)}.\end{eqnarray*}
 Since \[
\frac{\log\left(1/2\cdot\log\left(3/\delta\right)-1/2\cdot\log\log\left(1/\delta\right)\right)}{\log\left(3/\delta\right)-\log\log\left(1/\delta\right)}\sim\frac{\log\left(\log\left(1/\delta\right)\right)}{\log\left(1/\delta\right)}\]
 for $\delta\to0$ the result follows. \qed
\begin{acknowledgement*}
We would like to thank the referee for useful comments that helped
to improve the presentation of this paper significantly. 
\end{acknowledgement*}

\end{document}